\newtheorem{theorem}{Theorem}
\theoremstyle{definition}
\theoremstyle{lemma}
\newtheorem{lemma}[theorem]{Lemma}
\theoremstyle{remark}
\newtheorem{remark}[theorem]{Remark}
\newtheorem{assumption}[theorem]{Assumption}
\numberwithin{theorem}{section}
\numberwithin{equation}{section}
\numberwithin{table}{section}
\numberwithin{figure}{section}
\newcommand{\V}{\ensuremath{\mathcal{V}}}
\newcommand{\Q}{\ensuremath{\mathcal{Q}}}
\newcommand{\M}{\ensuremath{\mathcal{M}}}
\def\R{\mathbb{R}}
\definecolor{myBlue1}{RGB}{101,149,239}  
\definecolor{myBlue2}{RGB}{113,104,238} 
\definecolor{myBlue3}{RGB}{30,144,255} 
\definecolor{myGreen1}{RGB}{154,204,50} 
\definecolor{myGreen2}{RGB}{69,169,0} 
\definecolor{myGreen3}{RGB}{154,205,50} 
\definecolor{myGreen4}{RGB}{105,139,34} 
\definecolor{myRed1}{RGB}{210,105,30} 
\definecolor{myRed2}{RGB}{165,42,42} 
\definecolor{myRed3}{RGB}{139,26,26} 
\definecolor{lightgray}{RGB}{175,175,175} 
\definecolor{myLGray}{RGB}{225,225,225} 
\DeclareMathOperator{\id}{id}
\DeclareMathOperator{\tr}{tr}
\newcommand{\masterB}{B}
\newcommand{\Vh}{\mathbb{V}_h}
\newcommand{\Qh}{\mathbb{Q}_h}
\newcommand{\Mh}{\mathbb{M}_h}
\newcommand{\Xh}{\mathbb{X}_h}
\newcommand{\Ein}{\calE_\Omega^\text{in}}
\newcommand{\Ebd}{\calE_\Omega^\text{bd}}
\newcommand{\eg}{D}
\newcommand{\calKu}{\calK_u}
\newcommand{\calKp}{\calK_p}
\newcommand{\calBu}{\calB_u}
\newcommand{\calBp}{\calB_p}
\newcommand{\calB}{\ensuremath{\mathcal{B}} }
\newcommand{\calE}{\ensuremath{\mathcal{E}} }
\newcommand{\calK}{\ensuremath{\mathcal{K}} }
\newcommand{\calL}{\ensuremath{\mathcal{L}} }
\newcommand{\calM}{\ensuremath{\mathcal{M}} }
\newcommand{\calN}{\ensuremath{\mathcal{N}} }
\newcommand{\calP}{\ensuremath{\mathcal{P}} }
\newcommand{\calQ}{\ensuremath{\mathcal{Q}} }
\newcommand{\calT}{\ensuremath{\mathcal{T}} }
\newcommand{\calV}{\ensuremath{\mathcal{V}} }
\newcommand{\calX}{\ensuremath{\mathcal{X}} }
\newcommand{\fu}{f}
\newcommand{\fp}{g}
\newcommand{\au}{\alpha}
\newcommand{\ap}{\kappa}
\def\ds{\,\text{d}s}
\def\dx{\,\text{d}x}
\def\dy{\,\text{d}y}
\newcommand{\partialan}{\ensuremath{\partial_{\au,\nu}} }
\begin{document}
\title[AFEM for parabolic problems with dynamic boundary conditions]{A posteriori error estimation for parabolic problems with dynamic boundary conditions}
\author[]{R.~Altmann$^\dagger$, C.~Zimmer$^{\ddagger}$}
\address{${}^{\dagger}$ Institute of Analysis and Numerics, Otto von Guericke University Magdeburg, Universit\"atsplatz 2, 39106 Magdeburg, Germany}
\address{${}^{\ddagger}$ Department of Mathematics, University of Augsburg, Universit\"atsstr.~12a, 86159 Augsburg, Germany}
\email{robert.altmann@ovgu.de,christoph.zimmer@uni-a.de}
\thanks{Both authors acknowledge the support of the Deut\-sche For\-schungs\-ge\-mein\-schaft (DFG, German Research Foundation) through the project 446856041. Moreover, major parts of this work were carried out while the first author was affiliated with the Institute of Mathematics and the Centre for Advanced Analytics and Predictive Sciences (CAAPS) at the University of Augsburg.}
%
%
\date{\today}
\keywords{}
\begin{abstract}
This paper is concerned with adaptive mesh refinement strategies for the spatial discretization of parabolic problems with dynamic boundary conditions. This includes the characterization of inf--sup stable discretization schemes for a stationary model problem as a preliminary step. Based on an alternative formulation of the system as a partial differential--algebraic equation, we introduce a posteriori error estimators which allow local refinements as well as a special treatment of the boundary. We prove reliability and efficiency of the estimators and illustrate their performance in several numerical experiments. 
\end{abstract}
%
%
\maketitle
\setcounter{tocdepth}{3}
%
{\tiny {\bf Key words.} dynamic boundary conditions, adaptivity, PDAE, parabolic PDE}\\
\indent
{\tiny {\bf AMS subject classifications.}  {\bf 65M60}, {\bf 65L80}, {\bf 65J10}} 
%
%
%
\section{Introduction}

Within this paper, we consider a linear parabolic model problem with so-called {\em dynamic boundary conditions} in a bounded and polyhedral (Lipschitz) domain $\Omega\subseteq \R^d$, $d\in\{2,3\}$, on a time horizon $[0,T]$, $0<T<\infty$, namely  
\begin{subequations}
\label{eq:parabolicDynBC}
\begin{align}
	\dot u - \Delta_\alpha u
	&= \hat\fu \qquad\text{in } \Omega, \label{eq:parabolicDynBC:eqn} \\
	\dot u -   \Delta_{\Gamma,\kappa} u + \partialan u 
	&= \hat\fp \qquad\text{on } \Gamma \coloneqq \partial\Omega  \label{eq:parabolicDynBC:bc}
\end{align}
\end{subequations}
with initial condition~$u(0) = u^0$ and sufficiently smooth right-hand sides~$\hat\fu$, $\hat\fp$. Therein, we write 
\[
  \Delta_\alpha u \coloneqq \nabla\cdot(\au \nabla u), \qquad 
  \Delta_{\Gamma,\kappa} u \coloneqq \nabla_\Gamma\cdot(\ap \nabla_\Gamma u)
\]
for the weighted Laplacians with uniformly positive diffusion coefficients~$\au\in L^\infty(\Omega)$ and $\ap\in L^\infty(\Gamma)$. In the special case~$\ap\equiv 1$, the differential operator in~\eqref{eq:parabolicDynBC:bc} equals the well-known Laplace--Beltrami operator; see~\cite[Ch.~16.1]{GilT01}. Moreover, $\partialan u \coloneqq \nu\cdot(\au \nabla u)$ denotes the normal derivative corresponding to the differential operator in~\eqref{eq:parabolicDynBC:eqn} with unit normal vector~$\nu$. 

Condition~\eqref{eq:parabolicDynBC:bc} itself is a differential equation and enables the reflection of effective properties on the boundary of the domain~\cite{Esc93,Gol06}. To be more precise, this means that the momentum on the boundary is taken into account during the modeling process rather than being neglected as for standard boundary conditions. Because of this flexibility, this and similar models have attained increasing popularity nowadays, cf.~\cite{Gol06,Vit18,LiuW19}. Although well-understood from a theoretical point (see, e.g., \cite{FavGGR02,VazV08}), the numerical approximation of such systems is only tackled by a small number of papers; see~\cite{KovL17,VraS13a} as well as the early work~\cite{Fai79}. 

Within this paper, we benefit from a special formulation of~\eqref{eq:parabolicDynBC} as {\em partial differential--algebraic equation} (PDAE); see~\cite{LamMT13,Alt15} for an introduction. This means that we interpret the equations as a coupled system with an additional variable acting only on the boundary; see~\cite[Ch.~5.3]{Las02} as well as~\cite{Alt19}. The resulting PDAE approach has already proven advantageous from a numerical point of view in the context of heterogeneous boundary conditions~\cite{AltV21}, phase separation models such as the Cahn--Hilliard equation~\cite{AltZ22ppt_c}, and the design of bulk--surface splitting schemes~\cite{AltKZ22,AltZ22ppt_b,CsoFK23}. The main instrument of the PDAE approach is the possibility to apply different discretization schemes in the bulk~$\Omega$ and on the boundary~$\Gamma$, e.g., based on different meshes $\calT_\Omega$ and~$\calT_\Gamma$. This is also exploited in the present paper and is of special interest in applications where the solution oscillates rapidly on the boundary. 
Such oscillations also call for adaptive mesh refinements in order to improve the computational efficiency. Adaptive strategies, on the other hand, are based on a posteriori error estimates, which were first introduced for elliptic problems; see~\cite{AinT97,Lip04,Ver13}. 

The paper is structured as follows. After the introduction of the PDAE model in Section~\ref{sect:formulation:weak}, we shortly consider the temporal discretization and introduce a stationary model problem with a saddle point structure. Afterwards, we discuss inf--sup stable finite element schemes for this model problem in Section~\ref{sect:formulation:space}. Based on classical finite element literature, we construct (local) a posteriori error estimators in Section~\ref{sect:aposteriori}. Of special emphasis is the possibility to distinguish needed refinements of the bulk mesh on the boundary (denoted by $\calT_\Omega|_\Gamma$) and the boundary mesh~$\calT_\Gamma$. Moreover, we prove that these estimators are reliable as well as efficient, meaning that the estimators are indeed in the range of the actual error. We complete the paper with a number of numerical experiments in Section~\ref{sect:numerics}. This includes stationary problems on different spatial domains but also a dynamic problem such as~\eqref{eq:parabolicDynBC}. 

\subsection*{Notation}  
Throughout the paper we write~$a \lesssim b$ to indicate that there exists a generic constant~$C$, independent of any spatial and temporal discretization parameters, such that~$a \leq C b$. 
%
%
\section{Weak Formulation and Discretization}
\label{sect:formulation}

In this preliminary section, we introduce the weak operator formulation of the model problem~\eqref{eq:parabolicDynBC} in PDAE form. Moreover, we discuss the discretization in time and space. 
%
\subsection{Weak formulation}
\label{sect:formulation:weak}

With the introduction of an auxiliary variable $p\coloneqq \tr u$ (with~$\tr$ denoting the usual trace operator) as proposed in~\cite{Alt19} and the spaces 
\[
  \V \coloneqq H^1(\Omega), \qquad
  \Q \coloneqq H^1(\Gamma), \qquad 
  \M \coloneqq H^{-\sfrac 12}(\Gamma),
\]  
the weak form of~\eqref{eq:parabolicDynBC} can be expressed as the following PDAE: Find $u\colon [0,T] \to \V$, $p\colon [0,T] \to \Q$, and a Lagrange multiplier~$\lambda\colon [0,T] \to \M$ such that   
\begin{subequations}
\label{eq:PDAE}
\begin{align}
	\begin{bmatrix} \dot u \\ \dot p  \end{bmatrix}
	+ \begin{bmatrix} \calKu &  \\  & \calKp \end{bmatrix}
	\begin{bmatrix} u \\ p \end{bmatrix}
	+ \begin{bmatrix} -\calBu^* \\ \calBp^* \end{bmatrix} \lambda 
	&= \begin{bmatrix} \hat\fu \\ \hat\fp \end{bmatrix} \qquad \text{in } (\V\times \Q)^*, \label{eq:PDAE:a} \\
	\calBu u - \calBp p 
	&= \phantom{[[} 0\hspace{2.9em} \text{in } \M^*. \label{eq:PDAE:b}
\end{align}
\end{subequations}
This formulation includes the differential operators~$\calKu\colon \V \to \V^*$ and~$\calKp\colon \Q \to \Q^*$ given by
\[
	\langle \calKu u, v\rangle 
	\coloneqq \int_\Omega \au \nabla u \cdot \nabla v \dx, \qquad 
	\langle \calKp p, q\rangle
	\coloneqq \int_\Gamma \ap\, \nabla_\Gamma p \cdot \nabla_\Gamma q \dx  
\]
as well as the trace operator~$\calBu\colon \V\to\M^*$ and the canonical embedding~$\calBp\colon \Q\to\M^*$.  

We would like to emphasize that~\eqref{eq:PDAE} depicts an extended formulation where the connection of~$u$ and its trace~$p$ is explicitly stated in the form of a constraint. In contrast to the classical formulation~\eqref{eq:parabolicDynBC}, the PDAE~\eqref{eq:PDAE} includes a Lagrange multiplier as additional variable which satisfies $\lambda=\partial_{\alpha,\nu} u$ if the solution is sufficiently smooth. Moreover, the system comes together with initial data~$u(0) = u^0$ and~$p(0) = p^0$, which is called {\em consistent} if they coincide on the boundary, i.e., if $\calBu u^0 = \calBp p^0$. The existence of a unique (distributional) solution of system~\eqref{eq:PDAE} for consistent initial data follows from the inf--sup stability of the constraint operator together with the G\aa rding inequality of the differential operators; see~\cite{Alt19} as well as~\cite{EmmM13}.

At this point, we would like to emphasize that a direct spatial discretization of~\eqref{eq:PDAE} leads to a differential--algebraic equation (of differentiation index~$2$). In general, this implies numerical difficulties within the temporal discretization, cf.~\cite{HaiLR89,HaiW96}. Since the right-hand side of the constraint~\eqref{eq:PDAE:b} is homogeneous, however, these instabilities do not occur for consistent initial values and an index reduction is not needed, cf.~\cite[p.~33]{HaiLR89}. 

Within this paper, we follow the {\em Rothe method} for the discretization of system~\eqref{eq:PDAE}, i.e., we first discretize in time and then in space. The time discretization is shortly discussed in the following subsection, whereas we will focus on the spatial discretization using adaptive finite elements in each time step in Section~\ref{sect:formulation:space}. 
%
%
\subsection{Temporal discretization}
\label{sect:formulation:time}

We first consider the application of the implicit Euler scheme to~\eqref{eq:PDAE} with constant step size~$\tau$. Given approximations~$u^n\in\V$ and~$p^n\in\Q$ of $u$ and $p$ at time point $t^n \coloneqq n\tau$, respectively, we seek $u^{n+1}\in\V$, $p^{n+1}\in\Q$, and~$\lambda^{n+1}\in\M$ as the solution of
\begin{subequations}
\label{eq:PDAE:Euler}
\begin{align}
	\begin{bmatrix} u^{n+1} \\ p^{n+1} \end{bmatrix}
	+ \begin{bmatrix} \tau\calKu &  \\  & \tau\calKp \end{bmatrix}
	\begin{bmatrix} u^{n+1} \\ p^{n+1} \end{bmatrix}
	+ \begin{bmatrix} -\tau\calBu^* \\ \tau\calBp^* \end{bmatrix} \lambda^{n+1} 
	&= \begin{bmatrix} \tilde \fu \\ \tilde \fp \end{bmatrix} \qquad \text{in } (\V\times \Q)^*, \label{eq:PDAE:Euler:a} \\
	\calBu u^{n+1} - \calBp p^{n+1} 
	&= \phantom{[} 0\hspace{3.1em} \text{in } \M^*. \label{eq:PDAE:Euler:b}
\end{align}
\end{subequations}
Therein, $\tilde \fu$ (and similarly $\tilde \fp$) is some right-hand side which depends on the previous approximation~$u^n$ (respectively $p^n$). 
\begin{remark}
For semilinear applications, i.e., if we allow $\fu$ and $\fp$ to depend on $u$ and $p$, respectively, we obtain the same linear structure as long as the nonlinear terms are treated {\em explicitly}. 
\end{remark}
In summary, an implicit Euler discretization leads to the following (stationary) problem, which needs to be solved in every time step: Given $\fu\in \V^*$ and $\fp\in \Q^*$, find~$u\in \V$, $p\in \Q$, and $\lambda\in\M$ such that  
\begin{align*}
	\begin{bmatrix} u \\ p \end{bmatrix}
	+ \begin{bmatrix} \tau\calKu & \\ & \tau\calKp \end{bmatrix}
	\begin{bmatrix} u \\ p \end{bmatrix}
	+ \begin{bmatrix} -\tau\calBu^* \\ \tau\calBp^* \end{bmatrix} \lambda
	&= \begin{bmatrix} \fu \\ \fp \end{bmatrix} \qquad \text{in } (\V\times \Q)^*, \\
	\calBu u - \calBp p 
	&= \phantom{[} 0\hspace{3.1em} \text{in } \M^*.
\end{align*}
This motivates the model problem, which we will consider in Section~\ref{sect:aposteriori}, namely  
\begin{subequations}
	\label{eqn:saddle_point_AFEM}
	\begin{alignat}{4}
	(\sigma - \Delta_\alpha)\, u &= \fu, \\
	(\sigma - \Delta_{\Gamma,\kappa})\, p + \lambda &= \fp, \\
	\tr u - p  &= 0. 
	\end{alignat}
\end{subequations}
Note that this model is written in its strong form because of which the Lagrange multiplier (being equal to the weighted normal trace) disappears in the first equation. Moreover, the newly introduced parameter~$\sigma>0$ corresponds to $\tau^{-1}$ in the case of an implicit Euler discretization.
\begin{remark}[Discretization by Runge--Kutta methods]\label{rem:RK}
Assume a Butcher tableau
\begin{equation*}
	\begin{array}{c|c}
		\mathbf{c} & \mathbf{A}\\ \hline & \mathbf{b}^T
	\end{array} 
\end{equation*}
with an invertible coefficient matrix $\mathbf A \in \R^{s,s}$, which defines an algebraically stable Runge--Kutta scheme. According to~\cite[Sect.~8.3 f.]{Zim21}, this leads to a stable approximation. Furthermore, we assume that $-\mathbf A$ is globally stable, which is equivalent to the existence of a symmetric matrix $\mathbf D \in \R^{s,s}$ such that~$\mathbf D$ and $\mathbf{DA}^{-1}+\mathbf{A}^{-T}\mathbf{D}$ are positive definite~\cite[Th. 13.1.1]{LanT85}. Then, the calculation of the internal stages $\mathbf{u}^{n+1}\in \calV^s$, $\mathbf{p}^{n+1}\in \calQ^s$, $\bm{\lambda}^{n+1}\in \calM^s$ can be treated similarly to the implicit Euler method considered above. In comparison to~\eqref{eq:PDAE:Euler}, $u^{n+1}$ (and analogously~$p^{n+1}$ and~$\lambda^{n+1}$) needs to be replaced by $\mathbf{u}^{n+1}$. The operators are replaced by their Kronecker product with~$\mathbf D$, e.g., $\calK$ turns into $\mathbf D \otimes \calK$. Moreover, the operator $\mathbf D \mathbf{A}^{-1} \otimes \id$ pops up in front of~$\mathbf{u}^{n+1}$ and~$\mathbf{p}^{n+1}$. Finally, $\tilde \fu$ (and analogously~$\tilde \fp$) contains the approximation of the previous time point $u^n = (1-\mathbf b^T \mathbf A^{-1} \mathbf e) u^{n-1} + \mathbf b^T \mathbf A^{-1}\mathbf u^n$, $n=1,2,\ldots$, with $\mathbf e=[1,\ldots,1]^T \in \R^s$; cf.~\cite[Ch.~5 \& 8]{Zim21}.
	
For the translation of the upcoming analysis given in Section~\ref{sect:aposteriori} to the Runge--Kutta case, one uses that the assumptions on the matrix~$\mathbf D$ imply
\begin{gather*}
	\|\mathbf u^{n+1}\|^2_{[L^2(\Omega)]^s} 
	\lesssim (\mathbf D \mathbf A^{-1} \mathbf u^{n+1},\mathbf u^{n+1})_{[L^2(\Omega)]^s} 
	\lesssim \|\mathbf u^{n+1}\|^2_{[L^2(\Omega)]^s},\\
	\vert\mathbf u^{n+1}\vert^2_{[H^1(\Omega)]^s} 
	\lesssim \langle(\mathbf D \otimes \calK_u) \mathbf u^{n+1},\mathbf u^{n+1}\rangle 
	\lesssim \vert\mathbf u^{n+1}\vert^2_{[H^1(\Omega)]^s}.
\end{gather*}
Examples of diagonal matrices~$\mathbf D$ for the Gauss--Legendre, Radau IA, and Radau IIA methods can be found at \cite[p.~220 f.]{HaiW96}. 
\end{remark}
%
%
\subsection{Stable spatial discretization}
\label{sect:formulation:space}

We first collect a number of standard finite element spaces, which will be used in the following. Given a regular triangulation~$\calT_{\Omega}$ of the computational domain~$\Omega$, the space of (discontinuous) piecewise polynomials of degree~$k\geq 0$ is denoted by 
\begin{equation*}
	\calP_k^\text{d}(\calT_{\Omega}) 
	\coloneqq \big\{ v \in L^2(\Omega)\,\big|\, v|_{T} \text{ is a polynomial of degree $\leq k$ for all } T \in \calT_{\Omega} \big\}.
\end{equation*}
The subspace of functions which are additionally elements of $H^1(\Omega)$, is denoted by $\calP_k(\calT_{\Omega})$. It is well-known that this space can be characterized by
\begin{equation*}
	\calP_k(\calT_{\Omega}) 
	= \calP_k^\text{d}(\calT_{\Omega}) \cap C(\Omega);
\end{equation*}
see, e.g., \cite[Th.~II.5.2]{Bra07}. Given a regular triangulation $\calT_\Gamma$ of the boundary $\Gamma$ we denote analogously the space of (possibly discontinuous) piecewise polynomial functions and its subspace of continuous functions by $\calP_k^\text{d}(\calT_{\Gamma})$ and $\calP_k(\calT_{\Gamma})$, respectively. 

For the construction of stable finite element schemes, we further introduce the space of {\em bubble functions}. In two space dimensions, the edge-bubble function $\psi_E$ equals the scaled product of the two nodal basis functions corresponding to the nodes of the edge $E$. For $d = 3$, $\psi_E$ denotes the face-bubble ($E$ being a face in $\calT_{\Omega}$) and equals the scaled product of the three corresponding nodal basis functions, cf.~\cite{Ver13}. This leads to the space
\begin{multline*}
  \calE_\ell(\calT_\Omega) 
  \coloneqq \big\{ v \cdot \psi_E\,\big|\, v|_T \text{ is a polynomial of degree} \leq \ell \text{ for all } T \in \calT_\Omega,\\*
  \psi_E \text{ is an edge/face-bubble for } E \subseteq \Gamma \big\} 
  \subseteq P_{\ell+d}(\calT_{\Omega}).
\end{multline*}

Now let~$\Vh$, $\Qh$, and $\Mh$ denote finite-dimensional subspaces of $\V$, $\Q$, and $\M$. Due to the saddle point structure of the model problem, these spaces need to satisfy a discrete inf--sup condition, which reads 
\begin{equation}\label{eqn:disc_inf_sup}
  \adjustlimits\inf_{\lambda_h\in \Mh\setminus\{0\}} \sup_{(u_h,p_h)\in \Vh\times \Qh\setminus\{0\}} 
  \frac{\langle \calB_u u_h-\calB_p p_h,\lambda_h\rangle_{\Gamma}}{\|\lambda_h\|_{\calM}\, \big(\|u_h\|^2_{\calV}+\|p_h\|^2_{\calQ}\big)^{\sfrac 12}} 
  \geq \beta 
  > 0
\end{equation}
for some positive constant $\beta$ being independent of the mesh sizes. Here, $\langle\,\cdot\,,\cdot\,\rangle_\Gamma$ denotes the dual product in~$\calM$, which only acts on $\Gamma$. In a first result, we show that this stability condition is independent of the choice of the space~$\Qh$. 
\begin{theorem}[Equivalence of inf--sup stable discretizations]
\label{th:inf_sup_equivalence}
Consider a conforming Galerkin scheme~$\Vh \subseteq \calV = H^1(\Omega)$, $\Qh \subseteq \calQ = H^1(\Gamma)$, and $\Mh \subseteq \calM = H^{-\sfrac 12}(\Gamma)$, which includes the approximation property of the family~$\Vh$. Then, the discrete inf--sup condition~\eqref{eqn:disc_inf_sup} is satisfied if and only if there exists a positive constant $\hat{\beta}$ (independent of the mesh size) such that
\begin{equation}\label{eqn:disc_inf_sup_u_la}
	\adjustlimits \inf_{\lambda_h\in \Mh\setminus\{0\}} \sup_{u_h\in \Vh\setminus\{0\}} \frac{\langle \calB_u u_h,\lambda_h\rangle_\Gamma}{\|\lambda_h\|_{\calM}\|u_h\|_{\calV}} \geq \hat{\beta}
	> 0. 
\end{equation}
\end{theorem}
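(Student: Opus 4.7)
The easy direction \eqref{eqn:disc_inf_sup_u_la} $\Rightarrow$ \eqref{eqn:disc_inf_sup} is essentially immediate: for any $\lambda_h\in\Mh\setminus\{0\}$, I would restrict the supremum in \eqref{eqn:disc_inf_sup} to pairs with $p_h=0$, obtaining
\begin{equation*}
\sup_{(u_h,p_h)\neq 0} \frac{\langle \calB_u u_h - \calB_p p_h, \lambda_h\rangle_\Gamma}{\|\lambda_h\|_\calM\,\bigl(\|u_h\|_\calV^2 + \|p_h\|_\calQ^2\bigr)^{1/2}} \;\geq\; \sup_{u_h\neq 0} \frac{\langle \calB_u u_h, \lambda_h\rangle_\Gamma}{\|\lambda_h\|_\calM\,\|u_h\|_\calV} \;\geq\; \hat\beta,
\end{equation*}
so that \eqref{eqn:disc_inf_sup} holds with $\beta=\hat\beta$.

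For the nontrivial direction \eqref{eqn:disc_inf_sup} $\Rightarrow$ \eqref{eqn:disc_inf_sup_u_la}, I would first reformulate both conditions via the adjoints $\calB_u^*\colon \Mh\to\Vh^*$ and $\calB_p^*\colon \Mh\to\Qh^*$. In these terms, \eqref{eqn:disc_inf_sup} is equivalent to $\|\calB_u^*\lambda_h\|_{\Vh^*}^2 + \|\calB_p^*\lambda_h\|_{\Qh^*}^2 \geq \beta^2\|\lambda_h\|_\calM^2$, while \eqref{eqn:disc_inf_sup_u_la} reads $\|\calB_u^*\lambda_h\|_{\Vh^*} \geq \hat\beta\|\lambda_h\|_\calM$. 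The task is therefore to absorb the $\calB_p^*$--contribution into the $\calB_u^*$--contribution uniformly in the mesh size.

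The main device will be a bounded discrete lifting $\calL_h\colon \Qh\to\Vh$ with $\|\calL_h q_h\|_\calV \lesssim \|q_h\|_\calQ$, to be constructed in two steps: first I extend $q_h\in \Qh\subseteq H^{1/2}(\Gamma)$ to $\tilde q\in\calV$ via the harmonic extension (a bounded right inverse of the trace operator), so that $\|\tilde q\|_\calV\lesssim \|q_h\|_{H^{1/2}(\Gamma)}\leq\|q_h\|_\calQ$; then I project onto $\Vh$ by a stable quasi-interpolation $\calI_h\colon\calV\to\Vh$ supplied by the approximation property, setting $\calL_h q_h\coloneqq \calI_h\tilde q$. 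Assuming the Fortin-type identity $\langle \calB_u\calL_h q_h,\mu_h\rangle_\Gamma = \langle q_h,\mu_h\rangle_\Gamma$ for all $\mu_h\in\Mh$, I pick any near-optimizing pair $(u_h,p_h)$ for \eqref{eqn:disc_inf_sup} at a given $\lambda_h$ and define $\tilde u_h\coloneqq u_h - \calL_h p_h\in\Vh$, which then satisfies
\begin{equation*}
\langle \calB_u\tilde u_h,\lambda_h\rangle_\Gamma = \langle \calB_u u_h - \calB_p p_h,\lambda_h\rangle_\Gamma \quad\text{and}\quad \|\tilde u_h\|_\calV \lesssim \bigl(\|u_h\|_\calV^2 + \|p_h\|_\calQ^2\bigr)^{1/2}
\end{equation*}
by the triangle inequality, from which \eqref{eqn:disc_inf_sup_u_la} with $\hat\beta\gtrsim \beta$ follows at once.

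The hard part will be verifying the Fortin-type identity uniformly in $h$: for a generic quasi-interpolant one only has $\tr\calI_h\tilde q\approx q_h$ with a residual error in the $H^{1/2}(\Gamma)$--norm, and this error must be controlled so as to be absorbed into the $\calB_u^*$--contribution rather than spoiling the lower bound. I expect this to be exactly the point where the approximation property of $\Vh$ has to be invoked in a sharp, boundary-compatible form (for instance a Scott--Zhang variant that preserves traces into $\Mh^\circ$), possibly paired with a case distinction according to whether $\|u_h\|_\calV$ or $\|p_h\|_\calQ$ dominates. The residual terms can then be treated perturbatively using the continuity $\|\calB_p^*\lambda_h\|_{\Qh^*}\leq \|\lambda_h\|_\calM$ that stems from the canonical embedding $H^1(\Gamma)\hookrightarrow\calM^*$.
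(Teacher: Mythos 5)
Your easy direction coincides with the paper's and is fine. The hard direction \eqref{eqn:disc_inf_sup} $\Rightarrow$ \eqref{eqn:disc_inf_sup_u_la}, however, rests entirely on the uniformly bounded discrete lifting $\calL_h\colon\Qh\to\Vh$ with the Fortin identity $\langle\calB_u\calL_h q_h,\mu_h\rangle_\Gamma=\langle q_h,\mu_h\rangle_\Gamma$ for all $\mu_h\in\Mh$, and this is exactly the point you leave open. It cannot be repaired the way you suggest. In its exact form the existence of such an $\calL_h$ requires that $v_h\mapsto\langle\calB_u v_h,\cdot\rangle_\Gamma|_{\Mh}$ be surjective onto $\Mh^\ast$ with a uniformly bounded right inverse, which by Fortin's criterion is equivalent to \eqref{eqn:disc_inf_sup_u_la} itself; the argument is circular. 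In the perturbative form, the residual $\tr(\calI_h\tilde q)-q_h$ must be $o(1)\,\|q_h\|_{\calQ}$ in $H^{\sfrac 12}(\Gamma)$ \emph{uniformly over the unit ball of $\Qh$}, which changes with $h$. The theorem's hypothesis is only the qualitative approximation property of the family $\Vh$ (density for each fixed $u\in\calV$); it supplies no quantitative rate, and a Scott--Zhang type operator preserves discrete boundary values only when the datum already lies in $\Vh|_\Gamma$, whereas $q_h\in\Qh$ lives on the (generally finer) mesh $\calT_\Gamma$. Even granting a rate via $H^{\sfrac 32}$-regularity of the harmonic extension, you would obtain \eqref{eqn:disc_inf_sup_u_la} only for $h$ below a threshold, with additional regularity assumptions not present in the statement.

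The paper avoids all of this with a soft compactness argument for the contrapositive: if \eqref{eqn:disc_inf_sup_u_la} fails along a sequence, normalize $\lambda_h^\ast$ in $\calM$, extract a weak limit, use surjectivity of $\calB_u$ together with the qualitative approximation property to show the limit vanishes, and then use the compact embedding $H^{-\sfrac 12}(\Gamma)\hookrightarrow H^{-1}(\Gamma)$ to get $\|\lambda_h^\ast\|_{\calQ^\ast}\to 0$; since $\sup_{p_h}\langle\calB_p p_h,\lambda_h^\ast\rangle_\Gamma/\|p_h\|_{\calQ}\le\|\lambda_h^\ast\|_{\calQ^\ast}$, the $p_h$-contribution dies and \eqref{eqn:disc_inf_sup} fails too. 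You should either adopt this compactness route or strengthen your hypotheses and carry out the lifting construction in full; as written, the central step of your proof is missing.
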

\begin{proof}
($\Rightarrow$) We prove the if-part by contrapositive and  assume that~\eqref{eqn:disc_inf_sup_u_la} is not satisfied. Then, there exists a sequence $\{\lambda^\ast_h\} \subseteq \calM$ with $\lambda^\ast_h\in\Mh$ and~$\|\lambda_h^\ast\|_{\calM}=1$ such that
\begin{equation*}
	a_h \coloneqq \sup_{u_h\in \Vh\setminus\{0\}} \frac{\langle \calB_u u_h,\lambda_h^\ast\rangle_\Gamma}{\|u_h\|_{\calV}} 
	\to 0
	\qquad\quad\text{as }
	h \to 0^+.
\end{equation*}
Since the sequence is uniformly bounded, there exists a subsequence $h^\prime$ of $h$ with $\lambda_{h^\prime}^\ast \rightharpoonup \lambda^\ast$ in $\calM$ as $h^\prime \to 0^+$. 
Let $g \in \calM^\ast\setminus \{0\}$ be arbitrary. The surjectivity of $\calB_u\in \calL(\calV,\calM^\ast)$, \cite[Ch.~III, Th.~4.2]{Bra07}, implies the existence of a~$u \in \calV$ with $\calB_u u = g$. Due to the assumed approximation property of $\Vh$, there exists a sequence $\{u_h\}_h \subseteq \calV$ with $u_h \in \Vh$, $u_h\neq 0$, and $\lim_{h \to 0^+} u_h = u$ in~$\calV$. By the strong convergence of $\{u_h\}_h$ and the weak convergence of~$\{\lambda_{h^\prime}\}_{h^\prime}$, we have 
\begin{align*}
	0\leq \vert \langle g, \lambda^\ast\rangle_\Gamma \vert 
	= \lim_{h^\prime \to 0^+} \vert \langle \calB_u u_{h^\prime},\lambda_{h^\prime}^\ast\rangle_\Gamma \vert
	= \lim_{h^\prime \to 0^+}\|u\|_{\calV} \frac{\vert \langle \calB_u u_{h^\prime},\lambda_{h^\prime}^\ast\rangle_\Gamma\vert }{\|u_{h^\prime}\|_{\calV}} 
	\leq \lim_{h^\prime \to 0^+}\|u\|_{\calV}\, a_{h^{\prime}} = 0.
\end{align*}
Hence, $\lambda^\ast=0$ holds and the entire sequence~$\{\lambda^\ast_{h}\}$ vanishes weakly in $\calM=H^{-\sfrac 12}(\Gamma)$, cf.~\cite[Ch. I, Lem. 5.4]{GajGZ74}. Moreover, the sequence vanishes strongly in~$\calQ^\ast = H^{-1}(\Gamma)$, since the weak limit is unique and the embedding $H^{-\sfrac 12}(\Gamma) \hookrightarrow H^{-1}(\Gamma)$ is compact~\cite[Prop.~3.2 \&~3.4]{Tay11}. Finally, this leads to
\begin{align*}
	\lim_{h\to 0^+} &\adjustlimits\inf_{\lambda_h\in \Mh\setminus\{0\}} \sup_{(u_h,p_h)\in \Vh\times \Qh\setminus\{0\}}  \frac{\langle \calB_u u_h-\calB_p p_h,\lambda_h\rangle_\Gamma}{\|\lambda_h\|_{\calM}\,\big(\|u_h\|^2_{\calV}+\|p_h\|^2_{\calQ}\big)^{\sfrac 12}}\\
	&\leq \lim_{h\to 0^+} \sup_{(u_h,p_h)\in \Vh\times \Qh\setminus\{0\}} \frac{\langle \calB_u u_h-\calB_p p_h,\lambda_h^\ast \rangle_\Gamma}{\big(\|u_h\|^2_{\calV}+\|p_h\|^2_{\calQ}\big)^{\sfrac 12}}\\
	&\leq \lim_{h\to 0^+} \sup_{u_h\in \Vh\setminus\{0\}} \frac{\langle \calB_u u_h,\lambda_h^\ast \rangle_\Gamma}{\|u_h\|_{\calV}} +\sup_{p_h\in \Qh\setminus\{0\}} \frac{\langle\calB_p p_h,\lambda_h^\ast \rangle_\Gamma}{\|p_h\|_{\calQ}} \\
	&\leq \lim_{h\to 0^+} a_h + \|\lambda_h^\ast \|_{\calQ^\ast} 
	= 0,
\end{align*}
where we have used that $\calB_p$ is the embedding operator of $\calQ=H^{1}(\Gamma)$ into $\calM^\ast = H^{\sfrac 12}(\Gamma)$. Thus, the inf--sup expression cannot be bounded uniformly from below by a $\beta>0$.
	
($\Leftarrow$) The only-if-part follows immediately by
\begin{equation*}
	\sup_{(u_h,p_h)\in \Vh\times \Qh\setminus\{0\}}   \frac{\langle \calB_u u_h-\calB_p p_h,\lambda_h\rangle_\Gamma}{\big(\|u_h\|^2_{\calV}+\|p_h\|^2_{\calQ}\big)^{\sfrac 12}}
	\geq \sup_{u_h\in \Vh\setminus\{0\}} \frac{\langle \calB_u u_h,\lambda_h\rangle_\Gamma}{\|u_h\|_{\calV}}\geq \hat{\beta}\, \|\lambda_h\|_{\calM}.
	\qedhere
\end{equation*}
\end{proof}
\begin{remark}
In the limit case $\kappa\equiv0$, there is no differential operator acting on the boundary and the boundary conditions are called {\em locally reacting}. The corresponding solution spaces then read~$\calV = H^1(\Omega)$, $\calQ = L^2(\Gamma)$, and $\calM = L^{2}(\Gamma)$. In this setting, one can show analogously to Theorem~\ref{th:inf_sup_equivalence} that conforming finite element spaces are inf--sup stable if and only if the subproblem with $\Vh = \{0\}$ is inf--sup stable. 
\end{remark}
A direct consequence of Theorem~\ref{th:inf_sup_equivalence} is that it is sufficient to consider the inf--sup condition with $p_h=0$. Hence, one example class of inf--sup stable discretizations reads 
\begin{equation*}
	\Vh \coloneqq \calP_k(\calT_{\Omega})\cup \calE_\ell(\calT_{\Omega}) 
	\subseteq \calV, \qquad 
	\Qh \subseteq \calQ, \qquad 
	\Mh\coloneqq \calP_\ell^{\text{d}}(\calT_{\Omega}|_{\Gamma}) 
	\subseteq \calM
\end{equation*}
with $k\geq 1$, $\ell\geq 0$; see~\cite[Th.~2.3.7]{Lip04}. We would like to emphasize that the mesh used for the definition of~$\Mh$ may be different to~$\calT_\Omega|_\Gamma$ as long as it is synchronized with the mesh used for the bubble functions.  
Moreover, the scheme
\begin{equation*}
	\Vh \coloneqq \calP_1(\calT_{\Omega}) \subseteq \calV, \qquad 
	\Qh \subseteq \calQ, \qquad 
	\Mh\coloneqq \calP_1(\calT_{\Omega}|_{\Gamma}) \subseteq \calM
\end{equation*}
is inf--sup stable; see~\cite[Prop.~3.4]{AltV21}. The corresponding generalization to higher polynomial degrees reads as follows.
\begin{lemma}
The conforming finite element spaces
\begin{equation*}
	\Vh \coloneqq \calP_k(\calT_{\Omega}) \subseteq \calV, \qquad 
	\Qh \subseteq \calQ, \qquad 
	\Mh\coloneqq \calP_k(\calT_{\Omega}|_{\Gamma}) \subseteq \calM
\end{equation*}
satisfy the discrete inf--sup condition~\eqref{eqn:disc_inf_sup} for arbitrary $k\geq 1$ and $\Qh$.
\end{lemma}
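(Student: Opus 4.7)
The plan is to reduce~\eqref{eqn:disc_inf_sup} to the simpler condition~\eqref{eqn:disc_inf_sup_u_la} via Theorem~\ref{th:inf_sup_equivalence}, which is possible precisely because the latter is independent of~$\Qh$. To verify~\eqref{eqn:disc_inf_sup_u_la} I will use Fortin's criterion: it suffices to construct a linear operator $\Pi_h : \calV \to \Vh$ which is uniformly bounded in the $\calV$-norm and whose defect $u - \Pi_h u$ has vanishing $L^2(\Gamma)$-moments against~$\Mh$, i.e.,
\[
\int_\Gamma (u - \Pi_h u)\, \mu_h \ds = 0 \qquad \text{for every } u \in \calV \text{ and } \mu_h \in \Mh.
\]
Since $\mu_h \in \Mh \subseteq L^2(\Gamma)$, the $H^{\sfrac 12}$--$H^{-\sfrac 12}$ duality pairing reduces to the $L^2(\Gamma)$ inner product here, so this moment condition is exactly the Fortin property for~$\calB_u$. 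Combined with the continuous inf--sup for $\calB_u$ (which holds because the trace $\calB_u: H^1(\Omega) \to H^{\sfrac 12}(\Gamma) = \calM^*$ is bounded and surjective), Fortin's lemma will then deliver~\eqref{eqn:disc_inf_sup_u_la} with $\hat\beta$ proportional to $\beta$ divided by the stability constant of~$\Pi_h$.

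The operator $\Pi_h$ is to be constructed in two steps. First, apply a Scott--Zhang quasi-interpolant $\Pi_h^{\mathrm{SZ}}: \calV \to \Vh$, which is uniformly $H^1$-stable. Second, denoting the boundary residual by $\rho := u|_\Gamma - (\Pi_h^{\mathrm{SZ}} u)|_\Gamma$, let $\tilde\rho \in \Mh$ be its $L^2(\Gamma)$-orthogonal projection onto $\Mh$, and lift $\tilde\rho$ to $L_h\tilde\rho \in \Vh$ with $(L_h\tilde\rho)|_\Gamma = \tilde\rho$ and $\|L_h\tilde\rho\|_\calV \lesssim \|\tilde\rho\|_{H^{\sfrac 12}(\Gamma)}$. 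The required discrete lifting exists because $\Vh|_\Gamma = \Mh$; a concrete realization is to prescribe the boundary nodal values and set the interior ones to zero, combined with a stability argument on shape-regular meshes. The composite operator $\Pi_h u := \Pi_h^{\mathrm{SZ}} u + L_h\tilde\rho$ then satisfies the Fortin moment identity by construction, because it corrects the Scott--Zhang boundary residual exactly within $\Mh$.

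The main obstacle is the uniform $\calV$-stability of~$\Pi_h$, which amounts to controlling $\|\tilde\rho\|_{H^{\sfrac 12}(\Gamma)}$ by $\|u\|_\calV$. This requires the $H^{\sfrac 12}(\Gamma)$-stability of the $L^2(\Gamma)$-projection onto $\calP_k(\calT_\Omega|_\Gamma)$, a classical but non-trivial result that is standard on quasi-uniform meshes and extendable to shape-regular meshes via weighted-norm techniques. An alternative avoiding this step entirely is to replace the $L^2$-projection by a direct local correction using polynomial bubble functions of degree~$k$ on individual boundary elements adjacent to~$\Gamma$; in that variant stability follows from elementary scaling arguments on a reference element, at the cost of somewhat heavier bookkeeping.
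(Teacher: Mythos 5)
Your overall architecture---reduce to \eqref{eqn:disc_inf_sup_u_la} via Theorem~\ref{th:inf_sup_equivalence}, then verify it with a Fortin operator assembled from a Scott--Zhang interpolant, an $L^2(\Gamma)$-projection onto $\Mh$, and a discrete lifting---is sound and is a legitimate alternative to the paper's argument, which transfers the $k=1$ proof of \cite[Prop.~3.4]{AltV21} to general $k$ by means of a continuous, boundary-value-preserving extension operator from $\calP_k(\calT_{\Omega}|_\Gamma)$ to $\calP_k(\calT_{\Omega})$. In fact both routes stand or fall with that same operator, and this is where your proposal has a genuine gap: the ``concrete realization'' of $L_h$ by prescribing the boundary nodal values and setting the interior ones to zero is \emph{not} uniformly $H^{\sfrac 12}(\Gamma)\to H^1(\Omega)$ stable. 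Take $\tilde\rho\equiv 1$: the zero-interior extension has gradient of order $h_T^{-1}$ on each of the $O(h^{-1})$ boundary elements, so its $H^1$-seminorm squared is of order $h^{-1}$, while $\|1\|_{H^{\sfrac 12}(\Gamma)}=O(1)$; no stability argument on shape-regular meshes can repair this. The correct construction extends $\tilde\rho$ continuously to $H^1(\Omega)$ and then applies a trace-preserving Scott--Zhang operator, i.e., exactly the operator of \cite[Lem.~5.1]{HipM12} and \cite[Th.~2.1]{ScoZ90} that the paper invokes---so your proof is completable, but only by importing the paper's key tool rather than the shortcut you sketch.

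Two further points. First, the $H^{\sfrac 12}(\Gamma)$-stability of the $L^2(\Gamma)$-projection onto $\calP_k(\calT_{\Omega}|_\Gamma)$ with a mesh-independent constant is itself nontrivial on the merely shape-regular, possibly graded meshes admitted here; you rightly flag it, but it must be pinned down by a citation or a grading condition before the Fortin bound is closed. Second, your proposed fallback via boundary bubble functions cannot work for this particular pair: a correction of the form $q\,\psi_E$ with $q$ of degree $\ell$ has polynomial degree $\ell+d$ and therefore leaves $\Vh=\calP_k(\calT_{\Omega})$ unless the space is enriched---which is precisely why the paper's other stable family is $\Vh=\calP_k(\calT_{\Omega})\oplus\calE_\ell(\calT_{\Omega})$ with $\Mh=\calP_\ell^{\text{d}}(\calT_{\Omega}|_\Gamma)$. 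That escape route is thus unavailable for the lemma as stated.
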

\begin{proof}
The case $k=1$ is shown in \cite[Prop.~3.4]{AltV21}. The statement for general $k$ can be proven equivalently. Here, we use that there exists an extension operator from $\calP_k(\calT_{\Omega}|_{\Gamma}) \subseteq H^{\sfrac 1 2}(\Omega)$ to $\calP_k(\calT_{\Omega}) \subseteq H^1(\Omega)$ which is continuous and preserves boundary values; cf.~\cite[Lem.~5.1]{HipM12} and~\cite[Th.~2.1]{ScoZ90}.
In particular, its operator-norm depends only on $\Omega$ and the regularity of the triangulation $\calT_{\Omega}$.
\end{proof}
Recall that all presented stable schemes still have the freedom to set the discrete space~$\Qh$. In the following, $\Qh$ will be a standard finite element space but defined on another triangulation~$\calT_\Gamma$. Having in mind applications with strong fluctuations or heterogeneities on the boundary, $\calT_\Gamma$ usually equals a refinement of $\calT_\Omega|_\Gamma$.  
%
%
\section{A Posteriori Error Estimation}
\label{sect:aposteriori}

After the discussion on the temporal discretization and inf--sup stable finite element schemes, we now turn to the construction of efficient and reliable a posteriori error estimators. Following the Rothe method, the aim is an adaptive spatial discretization in each time step. Note that the formulation as coupled system in~\eqref{eq:PDAE} amplifies the use of adaptive schemes as we can optimize $\calT_\Omega$ (for the spaces $\Vh$, $\Mh$) as well as $\calT_\Gamma$ (for $\Qh$). Throughout this paper, we assume~$\calT_\Omega$ and $\calT_\Gamma$ to be shape regular; see~\cite[Ch.~II.5]{Bra07}. Note that this automatically implies the shape regularity of $\calT_{\Omega}|_\Gamma$. Within this section, we consider the two-dimensional setting but comment on necessary adjustments in the three-dimensional case. 

By $\calE_\Omega$ we denote the set of edges corresponding to the triangulation~$\calT_\Omega$, which can be decomposed into~$\Ein$ (interior edges) and $\Ebd$ (boundary edges). Moreover, $h_T$, $h_I$, and~$h_E$ denote the mesh sizes for elements and edges of a triangulation, respectively. For the upcoming analysis, we consider the following assumption. 
\begin{assumption}[Spatial discretization]
\label{ass:disc}
~
\begin{enumerate}
	\item The discretization scheme is inf--sup stable and it holds that $\calP_1(\calT_\Omega) \subseteq \Vh$, $\calP_1(\calT_\Gamma) \subseteq \Qh$, and $\calP_1(\calT_\Omega|_\Gamma) \subseteq \Mh$ or $\calP_0^\text{d}(\calT_\Omega|_\Gamma) \subseteq \Mh$.
	\item $\calT_{\Gamma}$ is a refinement of $\calT_\Omega|_{\Gamma}$. Moreover, there exists a positive constant $\rho>0$ such that $h_E \leq \rho h_I$ holds uniformly for every $I \in \calT_\Gamma$ with $I \subseteq E \in \Ebd$. 
	\item The diffusion coefficients satisfy $\alpha \in \calP_1^\text{d}(\calT_{\Omega})$ and $\kappa \in \calP_1^\text{d}(\calT_{\Gamma})$.   
\end{enumerate}
\end{assumption}
%

As explained in Section~\ref{sect:formulation:time}, we deal with the model problem~\eqref{eqn:saddle_point_AFEM}. Working again with the weak form, we consider the system 
\begin{alignat*}{4}
	\sigma (u, v)_{L^2(\Omega)} + \langle \calKu u, v\rangle - \langle \lambda, v\rangle_\Gamma 
	&= (\fu, v)_{L^2(\Omega)}, \\
	\sigma (p, q)_{L^2(\Gamma)} + \langle \calKp p, q\rangle + \langle \lambda, q\rangle_\Gamma 
	&= (\fp, q)_{L^2(\Gamma)}, \\
	\langle u - p, \mu\rangle_\Gamma 
	&= 0
\end{alignat*}
for test functions $v\in\V$, $q\in\Q$, $\mu\in\M$ and the parameter $\sigma$ which is related to the time step size. 
\begin{remark}
The upcoming analysis can be performed similarly for semilinear applications; see~\cite[Sect.~5.2.3]{Ver13}. We, however, focus on the linear case, since our main interest is the treatment of the two different meshes on the boundary.
\end{remark}

To shorten notation, we define the product space~$\calX \coloneqq \calV \times \calQ \times \calM$ equipped with the norm 
\[
	\|[u,p,\lambda]\|^2 
	\coloneqq \|u\|^2_{\calV} + \|p\|^2_{\calQ} + \|\lambda\|^2_{\calM}.
\]
Moreover, we introduce the bilinear form $\masterB\colon \calX \times \calX \to \R$, which is directly related to the model problem, by 
\begin{align*}
	\masterB\big(&[u,p,\lambda], [v,q,\mu]\big)\\*
	&= \int_\Omega \sigma uv + \au \nabla u\cdot \nabla v \dx 
	+ \int_\Gamma \sigma pq + \ap\, \nabla_\Gamma p\cdot \nabla_\Gamma q \ds 
	+ \int_\Gamma (u-p)\mu - (v-q)\lambda \ds.
\end{align*}
Note that we use the symbolic integral notation for the boundary terms as they equal the $L^2$-inner product if the arguments are sufficiently smooth. Obviously, the bilinear form~$\masterB$ is continuous. Furthermore, following the lines of \cite[Prop.~4.68]{Ver13}, one can show that~$\masterB$ is inf--sup stable, i.e., there exists a positive constant~$\beta_{\masterB} > 0$ such that   
\begin{equation}
	\label{eqn:inf_sup_masterB}
	\sup_{[v,q,\mu]\in \calX,\ \|[v,q,\mu]\|=1 } \masterB\big([u,p,\lambda],[v,q,\mu]\big) 
	\geq \beta_{\masterB} \|[u,p,\lambda]\|. 
\end{equation}
With $\masterB$ at hand, the introduced weak form of the model problem can be written as 
\[
	\masterB\big([u,p,\lambda], [v,q,\mu]\big) 
	= \int_\Omega \fu v \dx + \int_\Gamma \fp q \ds
\]
for all $[v,q,\mu]\in \calX$. Correspondingly, given discrete spaces~$\Vh\subset\V$, $\Qh\subset\Q$, and~$\Mh\subset\M$ and the product space~$\Xh \coloneqq \Vh \times \Qh \times \Mh$, we obtain a discrete solution triple $[u_h,p_h,\lambda_h] \in \Xh$ as the unique solution of 
\[
	\masterB\big([u_h,p_h,\lambda_h], [v_h,q_h,\mu_h]\big) 
	= \int_\Omega \fu v_h \dx + \int_\Gamma \fp q_h \ds
\]
for all $[v_h,q_h,\mu_h]\in \Xh$. Standard arguments such as in~\cite[Sect.~II.2.2]{BreF91} imply the a priori estimate 
\[
	\big\|[u-u_h,p-p_h,\lambda-\lambda_h]\big\|
	\lesssim \inf_{[v_h,q_h,\mu_h] \in \Xh}  \|u-v_h\|_{H^1(\Omega)} + \|p-q_h\|_{H^{1}(\Gamma)} + \|\lambda-\mu_h\|_{H^{-\sfrac 12}(\Gamma)}.
\]
%
%
At this point, we would like to recall that we are especially interested in cases where strong fluctuations occur on the boundary. This means, in particular, that we expect $\Qh$ to be a refinement of $\Vh$ restricted to the boundary. 
Moreover, this motivates the use of adaptive methods, which are based on a posteriori error estimates. These are topic of the upcoming subsection.
%
%
\subsection{Construction of local estimators}
\label{sect:aposteriori:def}

Following the methodology of~\cite{Lip04,Ver13}, we aim to construct a posteriori error estimators which can distinguish necessary refinements for the approximations of $u|_\Gamma$ and $p$, respectively. 
Within this section, the triple $[u,p,\lambda]\in\calX$ denotes the weak solution to the model problem~\eqref{eqn:saddle_point_AFEM} and $[u_h,p_h,\lambda_h]\in\Xh$ its discrete counterpart. As a consequence of~\eqref{eqn:inf_sup_masterB}, we have the error bound 
\begin{multline*}
	\|u-u_h\|_{H^1(\Omega)} + \|p-p_h\|_{H^1(\Gamma)} + \|\lambda-\lambda_h\|_{H^{-1/2}(\Gamma)} \\*
	\leq \frac{\sqrt 3}{\beta_{\masterB}} \sup_{[v,q,\mu]\in \calX,\ \|[v,q,\mu]\|=1 }\masterB\big([u-u_h,p-p_h,\lambda-\lambda_h],[v,q,\mu]\big).
\end{multline*}
In order to bound the right-hand side, we integrate by parts and obtain
\begin{align*}
	&\masterB\big([u-u_h,p-p_h,\lambda-\lambda_h],[v,q,\mu]\big)\\
	&= \sum_{T \in \calT_\Omega} \int_T \big(f - \sigma u_h + \Delta_\alpha u_h \big)\, v \dx 
	- \sum_{E \in \Ein} \int_E\big[\alpha \nabla u_h \cdot n_E \big]_E\, v \ds  \\* 
	&\quad+ \sum_{E \in \Ebd} \int_E \big( \lambda_h - \alpha \nabla u_h \cdot n_E \big)\, v \ds + \sum_{I \in \calT_\Gamma} \int_I \big(g - \sigma p_h + \Delta_{\Gamma,\kappa} p_h - \lambda_h\big)\, q \ds \\
	&\quad 
	- \sum_{x\in \calN_\Gamma} \big[ \kappa\nabla_\Gamma p_h \big]_x\, q(x)
	- \int_\Gamma \big( u_h - p_h \big)\, \mu \ds
\end{align*}
for every $[v,q,\mu]\in \calX$. Recall that $\Ein$ and $\Ebd$ denote the set of interior and boundary edges of the triangulation~$\calT_\Omega$, respectively. Moreover, $\calN_\Gamma$ equals the set of nodes of~$\calT_\Gamma$ and~$[\, \cdot\, ]_E$ the jump along an edge~$E$.

Making use of the mentioned Galerkin orthogonality, namely $$\masterB\big([u-u_h,p-p_h,\lambda-\lambda_h], [v_h,q_h,\mu_h]\big) = 0$$ for all $[v_h,q_h,\mu_h]\in\Xh$, we can add arbitrary discrete test functions. Hence, including $v_h \in \Vh$ as any quasi-interpolant of $v$ (such as the Cl{\'e}ment interpolant \cite{Cle75}) and $q_h \in \Qh$ as the (pointwise) interpolant of $q$, we can estimate 
\begin{align*}
&\masterB\big([u-u_h,p-p_h,\lambda-\lambda_h],[v,q,\mu]\big)\\
&=\masterB\big([u-u_h,p-p_h,\lambda-\lambda_h],[v-v_h,q-q_h,\mu]\big)\\
&\le \sum_{T \in \calT_\Omega} \big\|f - \sigma u_h + \Delta_\alpha u_h \big\|_{L^2(T)} \|v-v_h\|_{L^2(T)} 
+ \sum_{E \in \Ein} \big\| [ \alpha \nabla u_h \cdot n_E]_E \big\|_{L^2(E)} \|v-v_h\|_{L^2(E)}\\*
& \quad + \sum_{E \in \Ebd} \big\|\lambda_h - \alpha\nabla u_h \cdot n_E \big\|_{L^2(E)} \|v-v_h\|_{L^2(E)}\\*
& \quad + \sum_{I \in \calT_\Gamma} \big\|g - \sigma p_h + \Delta_{\Gamma,\kappa} p_h - \lambda_h\|_{L^2(I)} \|q - q_h\|_{L^2(I)}\\*
& \quad + \sum_{x\in \calN_\Gamma} \big\vert [\kappa\nabla_\Gamma p_h]_x \big\vert\,\underbrace{\vert q(x)-q_h(x)\vert}_{=0} 
+ \|u_h - p_h\|_{H^{\sfrac 12}(\Gamma)}\|\mu\|_{H^{-\sfrac 12}(\Gamma)} \\
&\lesssim \sum_{T \in \calT_\Omega} h_T\, \big\|f - \sigma u_h + \Delta_\alpha u_h \big\|_{L^2(T)} \|v\|_{H^1(\omega_T)} 
+ \sum_{E \in \Ein} h_E^{\sfrac 12}\, \big\| [\alpha \nabla u_h \cdot n_E]_E \big\|_{L^2(E)} \|v\|_{H^1(\omega_E)}\\*
& \quad + \sum_{E \in \Ebd} h_E^{\sfrac 12}\, \big\|\lambda_h - \alpha\nabla u_h \cdot n_E \big\|_{L^2(E)} \|v\|_{H^1(\omega_E)} 
\\*
& \quad+ \sum_{I \in \calT_\Gamma} h_I\, \big\|g - \sigma p_h + \Delta_{\Gamma,\kappa} p_h - \lambda_h\|_{L^2(I)} \|q\|_{H^1(I)} + \|u_h - p_h\|_{H^{\sfrac 12}(\Gamma)}\|\mu\|_{H^{-\sfrac 12}(\Gamma)} \\
&\lesssim \bigg( \sum_{T \in \calT_\Omega} h^2_T\, \big\|f - \sigma u_h + \Delta_\alpha u_h \big\|_{L^2(T)}^2 + \sum_{E \in \Ein} h_E\, \big\| [\alpha \nabla u_h \cdot n_E]_E \big\|^2_{L^2(E)} \\*
& \qquad+ \sum_{E \in \Ebd} h_E\, \big\|\lambda_h - \alpha\nabla u_h \cdot n_E \big\|_{L^2(E)}^2 + \sum_{I \in \calT_\Gamma} h_I^2\, \big\|g - \sigma p_h + \Delta_{\Gamma,\kappa} p_h - \lambda_h\|_{L^2(I)}^2 \\*
& \qquad+ \|u_h - p_h\|_{H^{\sfrac 12}(\Gamma)}^2 \bigg)^{1/2}\ \big\|[v,q,\mu]\big\|.
\end{align*}
Here, we have used standard (quasi) interpolation results as presented, e.g., in~\cite{Car06}. This includes the well-known element and edge patches, denoted by $\omega_T$ and $\omega_E$, respectively. 
Further note that the difference~$\tr u_h - p_h$ occurs globally, since the $H^{\sfrac 1 2}(\Gamma)$-norm is not additive. For a local version, we use that $\calT_{\Gamma}$ is a refinement of $\calT_{\Omega}|_\Gamma$. Thus, we can apply the interpolation inequality for $H^{1/2}(\Gamma)$, see \cite[Ch.1 Prop.~2.3]{LioM72} and \cite[p.~250 ff.]{AdaF03}, and the inverse estimate \cite[Ch.~II.6.8]{Bra07}. Together with the Poincar{\'e} inequality, 
this leads to 
\begin{align}
\label{eqn:est_constraint}
	\|u_h - p_h\|_{H^{\sfrac 12}(\Gamma)}^2  
	&\lesssim \|u_h - p_h\|_{H^{1}(\Gamma)}\cdot \|u_h - p_h\|_{L^{2}(\Gamma)}\\
	&\lesssim \vert u_h - p_h\vert_{H^{1}(\Gamma)}\cdot \|u_h - p_h\|_{L^{2}(\Gamma)} \notag\\
	&\lesssim \Big(\sum_{I \in \calT_\Gamma} h_I^{-2}\, \|u_h - p_h\|_{L^{2}(I)}^2 \cdot \sum_{I \in \calT_\Gamma} \|u_h - p_h\|_{L^{2}(I)}^2\Big)^{1/2} \notag\\
	&\le \bigg( \frac{h_{\Gamma,\max}}{h_{\Gamma,\min}}\bigg)^{\sfrac 12} \sum_{I \in \calT_\Gamma} h_I^{-1} \|u_h - p_h\|_{L^{2}(I)}^2 \notag
\end{align}
with a hidden constant only depending on~$\Gamma$ and the polynomial degrees of~$\Vh$ and~$\Qh$. Here, $h_{\Gamma,\max}$ (respectively $h_{\Gamma,\min}$) denotes the maximal (minimal) mesh width of the triangulation~$\calT_\Gamma$; see also Remark~\ref{rem:est_constraint} below. 

Finally, we define the following local quantities, which will serve as local error estimators, namely 
\begin{subequations}
\label{eqn:etas}
\begin{align}
	\eta_T^2 
	&\coloneqq h_T^2\, \big\|f - \sigma u_h + \Delta_\alpha u_h \big\|_{L^2(T)}^2
	\hspace{5.35cm}\text{for }T \in \calT_\Omega, \\
	\eta_E^2 
	&\coloneqq h_E\, \big\| [\alpha \nabla u_h \cdot n_E]_E \big\|_{L^2(E)}^2 
	\hspace{5.85cm}\text{for } E \in \Ein, \\
	\eta_E^2 
	&\coloneqq h_E\, \big\|\lambda_h - \alpha\nabla u_h \cdot n_E \big\|_{L^2(E)}^2 + \sum_{\substack{I \in \calT_\Gamma, I \subseteq E}} h_I^{-1}\, \big\|u_h - p_h\big\|_{L^{2}(I)}^2 
	\quad\text{for } E \in \Ebd, \\
	\eta_I^2 
	&\coloneqq h_I^2\, \big\|g - \sigma p_h + \Delta_{\Gamma,\kappa} p_h - \lambda_h \big\|_{L^2(I)}^2
	\hspace{4.43cm}\text{for }I \in \calT_{\Gamma}.  
\end{align}
\end{subequations}
The remainder of this section is devoted to the verification of the reliability and efficiency of these estimators. 
\begin{remark}
\label{rem:3d}
In the above derivation of the error estimators, we have used that functions in~$H^1(\Gamma)$ are continuous for a one-dimensional boundary~$\Gamma$ of a two-dimensional domain~$\Omega$. This allows us to neglect the jumps of $p$ over the nodes of $\calT_\Gamma$. If $\Omega$ is a subset of $\R^3$, however, we have to consider these jumps, which are now jumps over edges. As a consequence, the estimators in~\eqref{eqn:etas} need to be extended by 
\begin{equation}
\label{eqn:add_eta_3D}
	\eta_{\eg}^2 
	\coloneqq h_{\eg}\, \big\|[\kappa \nabla_\Gamma p_h \cdot n_\eg]_\eg\big\|_{L^2(\eg)}^2 \qquad \text{for } \eg \in \calE_\Gamma
\end{equation}
with $\calE_\Gamma$ being the set of edges of~$\calT_\Gamma$ and $h_{\eg}$ the length of an edge~$\eg \in \calE_\Gamma$. 
\end{remark}
%
%
\subsection{Reliability and efficiency}
\label{sect:aposteriori:thm}

The reliability of the introduced a posteriori estimators follows directly by construction and is summarized in the following theorem. 
\begin{theorem}[Reliability of the error estimator]
\label{th:reliability}
Let $\eta_T$, $\eta_E$, and $\eta_I$ be defined as in~\eqref{eqn:etas} and let Assumption~\ref{ass:disc} be satisfied. Then the discretization error can be bounded by
\begin{align*}
	\big\|[u-u_h, p-p_h, \lambda-\lambda_h] \big\|^2
	&= \|u-u_h\|_{H^1(\Omega)}^2 + \|p-p_h\|_{H^1(\Gamma)}^2 + \|\lambda-\lambda_h\|_{H^{-\sfrac 12}(\Gamma)}^2 \\
	&\lesssim \sum_{T \in \calT_\Omega} \eta_T^2 + \sum_{E \in \calE_\Omega} \eta_E^2 + \sum_{I \in \calT_{\Gamma} } \eta_I^2,
\end{align*}
where the hidden constant only depends on the dimension $d$, the polynomial degrees used for the finite element spaces, the polygonal domain~$\Omega$ with its boundary~$\Gamma$, and the shape regularity of the triangulations $\calT_\Omega$ and $\calT_\Gamma$, and the coefficients~$\alpha$, $\kappa$, and $\sigma$. 
\end{theorem}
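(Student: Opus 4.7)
The plan is essentially to harvest the chain of inequalities already assembled in Section~\ref{sect:aposteriori:def}. The starting point is the inf--sup stability~\eqref{eqn:inf_sup_masterB} of the bilinear form~$\masterB$, which reduces the proof to bounding
\begin{equation*}
\masterB\big([u-u_h,p-p_h,\lambda-\lambda_h],[v,q,\mu]\big)
\end{equation*}
by a multiple of $\|[v,q,\mu]\|$ times the square root of the indicator sum. Next I would invoke Galerkin orthogonality to replace $[v,q,\mu]$ by $[v-v_h,q-q_h,\mu]$ for a chosen discrete triple, and then integrate by parts element-wise on $\calT_\Omega$ and interval-wise on $\calT_\Gamma$. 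This produces the element residuals on $T$, interior-edge jumps on $\Ein$, the boundary-edge consistency terms $\lambda_h-\alpha\nabla u_h\cdot n_E$ on $\Ebd$, the tangential residuals on $I\in\calT_\Gamma$, and the constraint integral $\int_\Gamma(u_h-p_h)\mu\ds$, exactly as displayed.

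The choice of discrete test functions is dictated by Assumption~\ref{ass:disc}: the inclusions $\calP_1(\calT_\Omega)\subseteq\Vh$ and $\calP_1(\calT_\Gamma)\subseteq\Qh$ allow me to take $v_h$ to be a Cl\'ement-type quasi-interpolant in $\Vh$ and $q_h$ the pointwise nodal interpolant in $\Qh$. The latter annihilates the vertex-jump contributions at $\calN_\Gamma$, which is legitimate in the two-dimensional setting because $H^1(\Gamma)\hookrightarrow C(\Gamma)$, while the former supplies the standard local interpolation bounds $\|v-v_h\|_{L^2(T)}\lesssim h_T\|v\|_{H^1(\omega_T)}$ and $\|v-v_h\|_{L^2(E)}\lesssim h_E^{1/2}\|v\|_{H^1(\omega_E)}$. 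Applying these weightings term by term, the volume, interior-edge, boundary-edge, and tangential-residual indicators in~\eqref{eqn:etas} pop out directly.

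The technically delicate piece is the constraint term $\|u_h-p_h\|_{H^{1/2}(\Gamma)}\|\mu\|_{H^{-1/2}(\Gamma)}$, since the $H^{1/2}$-norm is non-local and there is no immediate patch decomposition. Here I would invoke the calculation~\eqref{eqn:est_constraint}: interpolation between $H^1(\Gamma)$ and $L^2(\Gamma)$, followed by an inverse estimate on the piecewise polynomial $u_h-p_h$ (which is valid on $\calT_\Gamma$ precisely because $\calT_\Gamma$ refines $\calT_\Omega|_\Gamma$, so that $\tr u_h$ is also piecewise polynomial with respect to $\calT_\Gamma$). The uniform bound $h_E\le\rho h_I$ from Assumption~\ref{ass:disc} absorbs the prefactor $(h_{\Gamma,\max}/h_{\Gamma,\min})^{\sfrac12}$ into the hidden constant, localizing the contribution to $\sum_{I\subseteq E}h_I^{-1}\|u_h-p_h\|_{L^2(I)}^2$ and thus into the boundary-edge indicator $\eta_E$. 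I expect this localization to be the main obstacle, both because of the non-standard use of the interpolation inequality and because it is the only point where the compatibility between the two boundary meshes is genuinely needed.

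To close, I would apply global Cauchy--Schwarz over $\calT_\Omega$, $\calE_\Omega$, and $\calT_\Gamma$, using the finite overlap of the patches $\omega_T$, $\omega_E$ to extract the factor $\|[v,q,\mu]\|$; dividing and taking the supremum then yields the bound in the statement. The hidden constants track through the inf--sup constant $\beta_{\masterB}$, the Cl\'ement constants, the shape-regularity of $\calT_\Omega$ and $\calT_\Gamma$, the polynomial degrees, and the essential suprema of $\alpha$, $\kappa$, and $\sigma$. For the three-dimensional extension alluded to in Remark~\ref{rem:3d}, the pointwise interpolant is no longer admissible and must be replaced by a Cl\'ement interpolant on $\calT_\Gamma$; the edge-jump terms of $\kappa\nabla_\Gamma p_h$ on $\calE_\Gamma$ then survive the integration by parts and are absorbed into the additional indicator~\eqref{eqn:add_eta_3D}, with no change to the rest of the argument.
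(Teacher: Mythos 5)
Your outline reproduces the paper's argument: the authors give no separate proof of Theorem~\ref{th:reliability} beyond the statement that it ``follows directly by construction,'' and the construction in Section~\ref{sect:aposteriori:def} is exactly the chain you describe (inf--sup stability of $\masterB$, Galerkin orthogonality, elementwise integration by parts, Cl\'ement/nodal interpolation, and the localization of the constraint term via~\eqref{eqn:est_constraint}).

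There is, however, one concrete error in the step you yourself single out as the delicate one. You claim that the assumption $h_E\le\rho\,h_I$ from Assumption~\ref{ass:disc}(2) absorbs the prefactor $(h_{\Gamma,\max}/h_{\Gamma,\min})^{1/2}$ arising in~\eqref{eqn:est_constraint} into the hidden constant. It does not: that assumption only controls the \emph{local} ratio between a boundary edge $E\in\Ebd$ and the sub-intervals $I\in\calT_\Gamma$ contained in it, whereas $h_{\Gamma,\max}/h_{\Gamma,\min}$ is the \emph{global} grading of $\calT_\Gamma$, which is unbounded along a sequence of adaptively refined meshes. The paper is explicit about this in Remark~\ref{rem:est_constraint}: the hidden constant of the reliability bound \emph{does} contain the ratio $h_{\Gamma,\max}/h_{\Gamma,\min}$, ``which may be large due to adaptive refinements,'' and removing it requires a genuinely different mechanism, namely the estimate of $u_h-p_h$ via a virtual refinement $\widetilde{\calT}_\Omega$ of $\calT_\Omega$ with $\calT_\Gamma=\widetilde{\calT}_\Omega|_\Gamma$ (cf.\ \cite[Lem.~2.4.6]{Lip04}), not the constant $\rho$. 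Indeed, $\rho$ is absent from the list of dependencies in Theorem~\ref{th:reliability} and enters only in the efficiency direction (Lemma~\ref{lem:eta_E_bd} and Theorem~\ref{th:efficiency}), where $h_E\|\mu\|_{L^2(E)}^2\le\rho\sum_{I\subseteq E}h_I^{-1}\|u_h-p_h\|_{L^2(I)}^2$ is the relevant use. So either state the theorem with a constant that additionally depends on $h_{\Gamma,\max}/h_{\Gamma,\min}$, or replace your justification by the virtual-refinement argument; as written, the absorption step is unjustified. The rest of the proposal is sound.
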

\begin{remark}\label{rem:est_constraint}
The calculation~\eqref{eqn:est_constraint} shows that the hidden constant contains the ratio $h_{\Gamma,\max} / h_{\Gamma,\min}$, which may be large due to adaptive refinements. 
An alternative estimate of $u_h-p_h$ on $\Gamma$ allows to replace this ratio by a constant which is independent of the mesh widths of~$\calT_\Gamma$ and~$\calT_\Omega$; cf.~\cite[Lem.~2.4.6]{Lip04}. For this, assume the use of bisection for $d=2$ or newest vertex bisection for $d=3$ to obtain~$\calT_\Gamma$. Then, there exists a refinement~$\widetilde{\calT}_{\Omega}$ of~$\calT_{\Omega}$ such that $\calT_\Gamma = \widetilde{\calT}_{\Omega}|_\Gamma$. Moreover, the shape regularity of $\widetilde{\calT}_\Omega$ only depends on~$\calT_\Omega$ and the dimension~$d$; see~\cite[Sect.~A.1]{DemS11}. 
\end{remark}
\begin{remark}
The reliability in the three-dimensional case follows similarly to Theorem~\ref{th:reliability}, where we have the additional term~$\sum_{\eg \in \calE_\Gamma} \eta_\eg^2$ on the right-hand side, cf.~Remark~\ref{rem:3d}.
\end{remark}
Theorem~\ref{th:reliability} guarantees that the error estimators provide an upper bound of the error. It remains to prove the efficiency of the estimators, i.e., the guarantee that they are (up to a constant) also a lower bound. 

In the following lemmas, we always assume that Assumption~\ref{ass:disc} is satisfied. Similar to Theorem~\ref{th:reliability}, all hidden constants will only depend on the dimension, the polynomial degrees of the finite element spaces, the domain with its boundary, the shape regularity of the meshes, the diffusion coefficients, and the constant $\rho$. We first consider the estimators~$\eta_T$ and $\eta_I$. 
\begin{lemma}\label{lem:eta_T_and_eta_I}
The estimators $\eta_T$ and $\eta_I$ 
satisfy
\begin{subequations}
\begin{align}
	\eta_T^2 
	\lesssim \sigma^2 h_T^2\, \|u-u_h\|_{L^2(T)}^2 + \|\alpha\|_{L^{\infty}(T)}^2\vert u-u_h\vert_{H^1(T)}^2 + 
	h_T^2\inf_{f_h \in \Vh} \|f-f_h\|_{L^2(T)}^2 
	\label{eqn:eta_T_estimate}
\end{align}
as well as
\begin{align}
	\eta_I^2
	\lesssim \sigma^2 h_I^2\, \|p-p_h\|_{L^2(I)}^2 + \|\kappa\|_{L^{\infty}(I)}^2\vert p-p_h\vert_{H^1(I)}^2 + h_I\, \|\lambda-\lambda_h\|_{H^{-\sfrac 12}(I)}^2 + h_I^2\inf_{g_h \in \Qh} \|g-g_h\|_{L^2(I)}^2. 
	\label{eqn:eta_I_estimate}	
\end{align}
\end{subequations}
These estimates further imply 
\begin{subequations}
	\label{eqn:eta_estimate_sum}
	\begin{gather}
	\sum_{T \in \calT_\Omega} \eta_T^2  \lesssim \|u-u_h\|^2_{H^1(\Omega)}  + \sum_{T \in \calT_\Omega} 
	h_T^2 \inf_{f_h \in \Vh} \|f-f_h\|^2_{L^2(T)},\label{eqn:eta_T_estimate_sum}\\
	\sum_{I \in \calT_{\Gamma}} \eta_I^2 
	\lesssim  \|p-p_h\|^2_{H^1(\Gamma)} + h_{\Gamma,\max}\, \|\lambda-\lambda_h\|_{H^{-\sfrac 12}(\Gamma)}^2 + \sum_{I \in \calT_{\Gamma}} h_I^2 \inf_{g_h \in \Qh} \|g-g_h\|^2_{L^2(I)} \label{eqn:eta_I_estimate_sum}.
	\end{gather}
\end{subequations}
\end{lemma}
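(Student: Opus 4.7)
The plan is to follow the classical bubble-function machinery of Verfürth, applying it separately to the interior residual on $T \in \calT_\Omega$ and to the tangential residual on $I \in \calT_\Gamma$. In each case, one multiplies the corresponding polynomial residual by a suitable bubble function supported on a single element, integrates by parts to convert the (strong) residual into an expression involving only the true error, and then controls the result with inverse estimates and norm equivalences on finite-dimensional polynomial spaces.

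For \eqref{eqn:eta_T_estimate}, fix $T \in \calT_\Omega$, pick an arbitrary $f_h \in \Vh$, and set $R_{T,h} := f_h - \sigma u_h + \Delta_\alpha u_h$, which is a polynomial on $T$. Let $\psi_T$ be the interior bubble on $T$ and take $v := R_{T,h}\, \psi_T$ extended by zero, so that $v \in H^1_0(T) \subset \calV$. By the polynomial norm equivalence $\|R_{T,h}\|_{L^2(T)}^2 \lesssim \int_T R_{T,h}\, v \dx$. Testing the weak form of \eqref{eqn:saddle_point_AFEM} with $[v,0,0]$ (the boundary pairing $\langle \lambda,v\rangle_\Gamma$ vanishes since $v=0$ on $\partial T$) and integrating $\Delta_\alpha u_h$ by parts on $T$ yields $\int_T R_{T,h}\, v \dx = \int_T (f_h - f)\, v \dx + \sigma \int_T (u-u_h)\, v \dx + \int_T \alpha\, \nabla(u-u_h)\cdot\nabla v \dx$. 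Cauchy--Schwarz together with the inverse estimates $\|\nabla v\|_{L^2(T)} \lesssim h_T^{-1}\|R_{T,h}\|_{L^2(T)}$ and $\|v\|_{L^2(T)} \leq \|R_{T,h}\|_{L^2(T)}$ gives $\|R_{T,h}\|_{L^2(T)} \lesssim \|f-f_h\|_{L^2(T)} + \sigma\|u-u_h\|_{L^2(T)} + h_T^{-1}\|\alpha\|_{L^\infty(T)}\,|u-u_h|_{H^1(T)}$; multiplying by $h_T$, squaring, and applying the triangle inequality $h_T\|R_T\|_{L^2(T)} \leq h_T\|R_{T,h}\|_{L^2(T)} + h_T\|f-f_h\|_{L^2(T)}$ yields the asserted local bound after taking the infimum over $f_h$.

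For \eqref{eqn:eta_I_estimate}, the same argument is performed on $I \in \calT_\Gamma$ with the one-dimensional bubble function $\psi_I$ vanishing at $\partial I$. Setting $R_{I,h} := g_h - \sigma p_h + \Delta_{\Gamma,\kappa} p_h - \lambda_h$ and $q := R_{I,h}\,\psi_I$, extended by zero to $\Gamma$, so that $q \in \calQ$, and testing the weak form with $[0,q,0]$ yields after tangential integration by parts on $I$ an identity analogous to the one above but with an additional duality contribution $\langle \lambda - \lambda_h,\, q\rangle_\Gamma$. Since $q$ vanishes at the endpoints of $I$, the bound $\|q\|_{L^2(I)} \leq \|R_{I,h}\|_{L^2(I)}$ together with the inverse estimate $\|q\|_{H^1(I)} \lesssim h_I^{-1}\|q\|_{L^2(I)}$ and interpolation gives $\|q\|_{H^{\sfrac 12}(I)} \lesssim h_I^{-\sfrac 12}\|q\|_{L^2(I)}$, so that $|\langle \lambda-\lambda_h, q\rangle_\Gamma| \lesssim h_I^{-\sfrac 12}\|\lambda-\lambda_h\|_{H^{-\sfrac 12}(I)}\|R_{I,h}\|_{L^2(I)}$. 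Combining all four contributions, multiplying by $h_I$, squaring, and using a triangle inequality against $R_I$ produces exactly the term $h_I\|\lambda-\lambda_h\|_{H^{-\sfrac 12}(I)}^2$ and the remaining ingredients of \eqref{eqn:eta_I_estimate}.

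Finally, the summed estimates \eqref{eqn:eta_T_estimate_sum} and \eqref{eqn:eta_I_estimate_sum} follow by summing the local bounds, absorbing $\|\alpha\|_{L^\infty}$, $\|\kappa\|_{L^\infty}$ into the hidden constant via Assumption~\ref{ass:disc} and using the finite overlap of element patches. The main obstacle is the localization of the dual norm: to convert $\sum_I h_I\|\lambda-\lambda_h\|_{H^{-\sfrac 12}(I)}^2$ into $h_{\Gamma,\max}\|\lambda-\lambda_h\|_{H^{-\sfrac 12}(\Gamma)}^2$, I would pull out $h_{\Gamma,\max}$ and then argue that, by duality applied to the $L^2$-orthogonal and $H^1_0$-orthogonal decomposition of a function on $\Gamma$ into contributions compactly supported on the (essentially disjoint) edges $I$, interpolation yields $\sum_I\|\lambda-\lambda_h\|_{H^{-\sfrac 12}(I)}^2 \lesssim \|\lambda-\lambda_h\|_{H^{-\sfrac 12}(\Gamma)}^2$. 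Apart from this step and the $d=3$ adaptation required for the additional edge-jump estimator $\eta_D$ from Remark~\ref{rem:3d}, the remaining arguments are routine.
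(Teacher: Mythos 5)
Your argument is correct and follows essentially the same route as the paper: the local bounds come from the standard bubble-function technique of Verf\"urth (which is what the cited reference \cite[Lem.~2.6.10(1)]{Lip04} carries out), with the multiplier term on $I$ handled exactly as in the paper via the zero extension of $q=R_{I,h}\psi_I$ and the interpolation inequality $\|q\|_{H^{\sfrac 12}(I)}^2\lesssim \|q\|_{H^1(I)}\|q\|_{L^2(I)}$. The only point where you diverge is the summed estimate \eqref{eqn:eta_I_estimate_sum}: you first localize and then invoke a superadditivity bound $\sum_I\|\lambda-\lambda_h\|_{H^{-\sfrac 12}(I)}^2\lesssim\|\lambda-\lambda_h\|_{H^{-\sfrac 12}(\Gamma)}^2$ (valid for the duals of the zero-extension spaces, by pairing $\lambda-\lambda_h$ with the sum of near-optimal, disjointly supported test functions), whereas the paper simply reruns the bubble argument globally on $\Gamma$ and pairs against $\|\lambda-\lambda_h\|_{H^{-\sfrac 12}(\Gamma)}$ directly, which sidesteps that localization lemma.
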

\begin{proof}
Estimate~\eqref{eqn:eta_T_estimate} follows by the lines of \cite[Lem.~2.6.10(1)]{Lip04}. For~\eqref{eqn:eta_I_estimate}, we note that every segment~$I \in \calT_\Gamma$ is a part of a hyperplane $\mathbb{H}_I \cong \R^{d-1}$. With this, one proves~\eqref{eqn:eta_I_estimate} analogously to~\eqref{eqn:eta_T_estimate}, where one uses that for every $\nu \in H^1_0(I)$ -- extended by zero outside of $I$ -- it holds the interpolation inequality 
\begin{align*}
	\|\nu\|_{H^{\sfrac 12}(I)}^2 
	&= \int_I \nu(x)^2 \dx + \int_I \int_I \frac{\vert \nu(x)-\nu(y) \vert^2}{\|x-y\|_{\R^d}^d} \dx \dy \\
	&\leq \int_{\mathbb{H}_I} \nu(x)^2 \dx + \int_{\mathbb{H}_I} \int_{\mathbb{H}_I} \frac{\vert \nu(x)-\nu(y) \vert^2}{\|x-y\|_{\R^d}^d} \dx \dy \\
	&= \|\nu\|_{H^{\sfrac 12}(\R^{d-1})}^2 
	\lesssim \|\nu \|_{H^1(\R^{d-1})}\|\nu \|_{L^2(\R^{d-1})} 
	= \|\nu \|_{H^1(I)}\|\nu \|_{L^2(I)}, 
\end{align*}
see~\cite[Ch.1 Prop.~2.3 \& Th.~7.1]{LioM72}. 
%
Inequality~\eqref{eqn:eta_T_estimate_sum} is a immediate consequence of~\eqref{eqn:eta_T_estimate} together with $h_T \leq |\Omega|$. Finally, \eqref{eqn:eta_I_estimate_sum} can be shown in the same manner as~\eqref{eqn:eta_I_estimate} if adjusted to the entire boundary $\Gamma$.
\end{proof}
To obtain an upper bound on $\eta_E$, we distinguish interior and boundary edges. 
\begin{lemma}\label{lem:eta_E_int}
For an interior edge $E \in \Ein$ it holds that
\[
	\eta_E^2 
	\lesssim \sigma^2 h_E^2\, \|u-u_h\|_{L^2(\omega_E)} + \|\alpha\|^2_{L^\infty(\omega_E)}|u-u_h|_{H^1(\omega_E)}^2 + \sum_{\substack{T\in \calT_\Omega,\ T \subseteq \omega_E}} \eta_T^2
\]
and, hence, 
\[	
	\sum_{E \in \Ein} \eta_E^2 \lesssim  \|u-u_h\|_{H^1(\Omega)}^2 + \sum_{T \in \calT_\Omega} 
	h_T^2 \inf_{f_h \in \Vh} \|f-f_h\|^2_{L^2(T)}.
\]
\end{lemma}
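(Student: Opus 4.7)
The plan is to use the classical edge-bubble technique of Verf\"urth, adapted to our interior-edge setting, where the Lagrange multiplier contribution vanishes. Fix an interior edge $E\in\Ein$, let $J_E \coloneqq [\alpha\nabla u_h\cdot n_E]_E$, let $\psi_E$ be the associated edge/face bubble supported in the patch $\omega_E = T_1\cup T_2$, and let $\tilde{J}_E$ be a polynomial extension of $J_E$ into $\omega_E$. Setting $v \coloneqq \tilde{J}_E\,\psi_E \in H^1_0(\omega_E)$, we have $v|_\Gamma = 0$, so the Lagrange multiplier pairing in the first line of the weak form drops, and testing with $v$ gives
\[
	\sigma\int_{\omega_E} u\,v\dx + \int_{\omega_E}\alpha\nabla u\cdot\nabla v\dx = \int_{\omega_E} f\,v\dx.
\]

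Piecewise integration by parts of $u_h$ on $T_1$ and $T_2$, combined with the identity above, yields the standard residual representation
\[
	\int_E J_E\,v\ds = \sum_{T\subseteq\omega_E}\int_T (f-\sigma u_h+\Delta_\alpha u_h)\,v\dx - \sigma\int_{\omega_E}(u-u_h)v\dx - \int_{\omega_E}\alpha\nabla(u-u_h)\cdot\nabla v\dx.
\]
Next I will invoke the well-known bubble-function properties: the norm equivalence $\|J_E\|_{L^2(E)}^2\lesssim \int_E J_E\,\tilde J_E\,\psi_E\ds$ on the (finite-dimensional) polynomial space, together with the inverse inequalities $\|v\|_{L^2(T)}\lesssim h_E^{1/2}\|J_E\|_{L^2(E)}$ and $\|\nabla v\|_{L^2(T)}\lesssim h_E^{-1/2}\|J_E\|_{L^2(E)}$ for $T\subseteq\omega_E$; see, e.g., \cite[Ch.~3]{Ver13}.

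Applying Cauchy--Schwarz to the right-hand side, using these estimates, dividing through by $\|J_E\|_{L^2(E)}$, squaring, and multiplying by $h_E$, I obtain
\[
	h_E\,\|J_E\|_{L^2(E)}^2 \lesssim \sum_{T\subseteq\omega_E} h_E^2\,\|f-\sigma u_h+\Delta_\alpha u_h\|_{L^2(T)}^2 + \sigma^2 h_E^2\,\|u-u_h\|_{L^2(\omega_E)}^2 + \|\alpha\|_{L^\infty(\omega_E)}^2|u-u_h|_{H^1(\omega_E)}^2.
\]
Shape regularity gives $h_E\lesssim h_T$ for $T\subseteq\omega_E$, hence the first term on the right is controlled by $\sum_{T\subseteq\omega_E}\eta_T^2$; this yields the local bound (with $\|u-u_h\|_{L^2(\omega_E)}$ squared, as is standard -- the unsquared norm in the statement appears to be a typographical slip).

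For the global bound, I sum over $E\in\Ein$. Using the finite overlap of the patches $\{\omega_E\}$ (uniform in the mesh by shape regularity), the first two right-hand side terms collapse to $\|u-u_h\|_{H^1(\Omega)}^2$ after absorbing $\sigma$, $\|\alpha\|_{L^\infty(\Omega)}$, and $\diam(\Omega)$ into the hidden constant; the sum of $\sum_{T\subseteq\omega_E}\eta_T^2$ is similarly controlled by $\sum_{T\in\calT_\Omega}\eta_T^2$, which Lemma~\ref{lem:eta_T_and_eta_I} bounds by $\|u-u_h\|_{H^1(\Omega)}^2 + \sum_T h_T^2\inf_{f_h\in\Vh}\|f-f_h\|_{L^2(T)}^2$, producing the claimed estimate. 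I do not anticipate a real obstacle here: the key observation that makes the interior-edge case identical to the standard Poisson analysis is simply that $v=\tilde J_E\psi_E$ has compact support away from $\Gamma$, so none of the boundary or coupling terms in $B$ enter the residual identity.
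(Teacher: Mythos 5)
Your proposal is correct and is essentially the paper's own argument: the paper simply defers to the classical edge-bubble technique (citing \cite[Lem.~3.6]{AinT97}), which is exactly the residual representation, bubble-function norm equivalence, and inverse estimates you spell out, with the key observation that $v=\tilde J_E\psi_E$ vanishes on $\Gamma$ so the multiplier term drops. Your remark that the unsquared $\|u-u_h\|_{L^2(\omega_E)}$ in the statement is a typographical slip is also right.
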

\begin{proof}
The proof follows the lines of \cite[Lem.~3.6]{AinT97}.
\end{proof}
\begin{lemma}\label{lem:eta_E_bd}
For a boundary edge $E \in \Ebd$ we define the boundary edge patch~$\omega_{E,\Gamma}$ by 
\begin{equation*}
	\omega_{E,\Gamma} 
	\coloneqq \big\{ E^\prime \in \Ebd\ |\ \overline{E}\cap \overline{E^\prime} \neq \emptyset \big\}
\end{equation*}
and $T_E$ as the (unique) element in $\calT_\Omega$ with edge $E$. Then, it holds that 
\begin{equation*}
	\eta_E^2 
	\lesssim \| u-u_h\|_{H^1(\omega_{T_E})}^2 + 
	\|p-p_h\|_{H^{1}(\omega_{E,\Gamma})}^2 
	+ h_E^{\sfrac 12}\, \|\lambda - \lambda_{h}\|_{H^{-\sfrac 12}(E)}^2 + h_{T_E}^2\inf_{f_h \in \Vh} \|f-f_h\|_{L^2(T_E)}^2 
\end{equation*}
and, hence, 
%
\begin{equation*}
	\sum_{E \in \Ebd} \eta_E^2 \lesssim \| u-u_h\|_{H^1(\Omega)}^2 + 
	\|p-p_h\|_{H^{1}(\Gamma)}^2 
	+ \|\lambda - \lambda_{h}\|_{H^{-\sfrac 12}(\Gamma)}^2 + \sum_{E \in \Ebd} h_{T_E}^2\inf_{f_h \in \Vh} \|f-f_h\|_{L^2(T_E)}^2.
\end{equation*}
\end{lemma}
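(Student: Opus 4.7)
My plan is to split $\eta_E^2$ into the flux residual $R_1\coloneqq h_E\|\lambda_h-\alpha\nabla u_h\cdot n_E\|_{L^2(E)}^2$ and the constraint-violation contribution $R_2\coloneqq\sum_{I\subseteq E}h_I^{-1}\|u_h-p_h\|_{L^2(I)}^2$, bound each separately, and finally sum over $E\in\Ebd$ using the finite overlap of the patches $\omega_{T_E}$ and $\omega_{E,\Gamma}$ guaranteed by shape regularity.

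For $R_1$ I would adapt the classical edge-bubble technique of Verfürth and Ainsworth--Oden to the present Lagrange-multiplier setting. Set $r_E\coloneqq\lambda_h-\alpha\nabla u_h\cdot n_E$ and introduce the test function $v\coloneqq\tilde r_E\,\psi_E\in H^1(\Omega)$, supported on $T_E$ and vanishing on $\partial T_E\setminus E$, where $\tilde r_E$ is a polynomial extension of $r_E$ from $E$ into $T_E$ and $\psi_E$ is the edge bubble. Standard bubble-function equivalences yield $\|r_E\|_{L^2(E)}^2\lesssim\int_E r_E v\,ds$. Testing the continuous $u$-equation with $v$, invoking Galerkin orthogonality, and integrating by parts on $T_E$ rewrites the boundary integral as
\[
\int_E r_E v\,ds = -\int_{T_E}(f-\sigma u_h+\Delta_\alpha u_h)v\,dx + \int_{T_E}\bigl(\sigma(u-u_h)v+\alpha\nabla(u-u_h)\cdot\nabla v\bigr)\,dx - \int_E(\lambda-\lambda_h)v\,ds.
\]
Cauchy--Schwarz together with the scaling estimates $\|v\|_{L^2(T_E)}\lesssim h_E^{1/2}\|r_E\|_{L^2(E)}$, $|v|_{H^1(T_E)}\lesssim h_E^{-1/2}\|r_E\|_{L^2(E)}$, and an appropriate fractional bound of $v$ used in the $H^{-1/2}$-duality, combined with Lemma~\ref{lem:eta_T_and_eta_I} to absorb the resulting element residual, produce the desired bound for $R_1$ including the factor $h_E^{1/2}\|\lambda-\lambda_h\|_{H^{-1/2}(E)}^2$.

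For $R_2$ I would exploit the exact consistency $u=p$ on $\Gamma$, giving $u_h-p_h=(u_h-u)+(p-p_h)$ on $\Gamma$, together with the discrete constraint. By Assumption~\ref{ass:disc} the space $\Mh$ contains enough functions to enforce a zero-mean orthogonality on $u_h-p_h$ over each boundary segment $E$, most directly from $\calP_0^{\text{d}}(\calT_\Omega|_\Gamma)\subseteq\Mh$, which yields $\int_E(u_h-p_h)\,ds=0$. Using $h_I\sim h_E$ (from $\calT_\Gamma$ refining $\calT_\Omega|_\Gamma$ combined with $h_E\le\rho h_I$), Poincaré's inequality on $E$ applied to the zero-mean function $u_h-p_h$ gives $\sum_{I\subseteq E}h_I^{-1}\|u_h-p_h\|_{L^2(I)}^2\lesssim h_E\,|u_h-p_h|_{H^1(E)}^2$. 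The tangential seminorm on the right is then controlled by inserting a common $L^2$-projection $\pi$ (which satisfies $\pi u=\pi p$ on $\Gamma$ thanks to $u|_\Gamma=p$) and applying inverse estimates to the discrete polynomial differences $u_h-\pi u$ and $p_h-\pi p$, which bounds it by $h_E^{-1}\bigl(\|u-u_h\|_{H^1(T_E)}^2+\|p-p_h\|_{H^1(\omega_{E,\Gamma})}^2\bigr)$, as required.

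The principal obstacle is $R_2$: a naive triangle inequality on $\|u_h-p_h\|_{L^2(I)}$ leaves terms such as $h_I^{-1}\|u-u_h\|_{L^2(I)}^2$, which the scaled trace theorem only bounds by the ill-scaled expression $h_E^{-2}\|u-u_h\|_{L^2(T_E)}^2+|u-u_h|_{H^1(T_E)}^2$, breaking the uniform dependence on mesh size. The remedy is to exploit the exact boundary consistency $u=p$, the discrete orthogonality enforced by $\Mh$, Poincaré's inequality on $E$, and inverse estimates on the discrete polynomials \emph{simultaneously}. Once the per-edge bound is established, the summed inequality follows at once from the finite overlap of patches under shape regularity.
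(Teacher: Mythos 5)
Your treatment of the flux residual $R_1$ follows the same edge-bubble route as the paper (Ainsworth--Oden/Verf\"urth adapted to the multiplier, with the $H^{-\sfrac 12}(E)$ duality for $\lambda-\lambda_h$), and that part is sound. The genuine gap is in $R_2$, and it is precisely the obstacle you name in your last paragraph, which your remedy does not actually remove. After the zero-mean/Poincar\'e step you must bound $h_E\,|u_h-p_h|_{H^1(E)}^2$. Inserting the projection $\pi$ and applying the standard ($L^2$-based) inverse estimate to the polynomial $u_h-\pi u=\pi(u_h-u)$ gives $h_E\,|u_h-\pi u|_{H^1(E)}^2\lesssim h_E^{-1}\|u-u_h\|_{L^2(E)}^2$, and the scaled trace inequality only yields $\|v\|_{L^2(E)}^2\lesssim h_E^{-1}\|v\|_{L^2(T_E)}^2+h_E\,|v|_{H^1(T_E)}^2$, so you are left with $h_E^{-2}\|u-u_h\|_{L^2(T_E)}^2$ --- exactly the ill-scaled term you set out to avoid. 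Your claimed intermediate bound $|u_h-p_h|_{H^1(E)}^2\lesssim h_E^{-1}\big(\|u-u_h\|_{H^1(T_E)}^2+\dots\big)$ does not follow from the stated tools; it would require a fractional inverse inequality $|q|_{H^1(E)}\lesssim h_E^{-\sfrac 12}\|q\|_{H^{\sfrac 12}(E)}$ together with $H^{\sfrac 12}$-stability of $\pi$, at which point you are effectively re-deriving the paper's argument in the $H^{\pm\sfrac 12}$ scale rather than working in $L^2$ and $H^1$ on $E$.

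The paper avoids the issue by never leaving the duality pairing: it sets $\mu=\sum_{I\subseteq E}h_I^{-1}(u_h-p_h)\chi_I$, uses both the continuous and the discrete constraint to write $\sum_I h_I^{-1}\|u_h-p_h\|_{L^2(I)}^2=\int_E(\mu-\mu_h)\big(u_h-u-(p_h-p)\big)\,ds$ for a suitable $\mu_h\in\Mh$, and then measures $\mu-\mu_h$ in $H^{-\sfrac 12}(E)$ against the errors in $H^{\sfrac 12}(E)$. The estimate $\|\mu-\mu_h\|_{H^{-\sfrac 12}(E)}\lesssim h_E^{\sfrac 12}\|\mu\|_{L^2(E)}$ supplies the missing half power of $h$, and the $H^{\sfrac 12}(E)$ norms of $u-u_h$ and $p-p_h$ are controlled by $H^1$ norms on $T_E$ and $\omega_{E,\Gamma}$ without any negative powers of the mesh size. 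Note also that your argument relies on $\chi_E\in\Mh$, i.e.\ on $\calP_0^\text{d}(\calT_\Omega|_\Gamma)\subseteq\Mh$; Assumption~\ref{ass:disc}(1) alternatively allows only $\calP_1(\calT_\Omega|_\Gamma)\subseteq\Mh$, in which case the single-edge zero-mean property is unavailable and the paper instead uses a weighted Cl\'ement quasi-interpolant of $\mu$ --- a case your proposal does not cover.
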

\begin{proof}
Combining the arguments of \cite[Lem.~2.6.12]{Lip04}, \cite[Lem.~3.6]{AinT97}, and the one used for the derivation of~\eqref{eqn:eta_I_estimate}, we have
\begin{multline*}
	h_E\, \|\lambda_h - \alpha \nabla u_h \cdot n_E\|_{L^2(E)}^2 
	\lesssim \sigma^2 h_E^2\, \|u-u_h\|_{L^2(T_E)} + \|\alpha\|^2_{L^\infty(T_E)}\vert u-u_h\vert_{H^1(T_E)}^2\\ + h_E^{\sfrac 12}\, \|\lambda - \lambda_{h}\|_{H^{-\sfrac 12}(E)}^2 + \eta_{T_E}^2 
\end{multline*}
and, hence, for the entire boundary
\begin{equation*}
	\sum_{E \in \Ebd} h_E\, \|\lambda_h - \alpha \nabla u_h \cdot n_E\|_{L^2(E)}^2 \lesssim \| u-u_h\|_{H^1(\Omega)}^2 + \|\lambda - \lambda_{h}\|_{H^{-\sfrac 12}(\Gamma)}^2 + \sum_{E \in \Ebd} \eta_{T_E}^2.
\end{equation*}
For an estimate of the second summand of $\eta_E$, we set $\mu \coloneqq \sum_{I \in \calT_\Gamma, I \subseteq E} h_I^{-1} (u_h - p_h)\, \chi_I \in L^2(\Gamma)\hookrightarrow H^{-\sfrac 12}(\Gamma)$ with the characteristic function $\chi_I$. We now distinguish the two cases~$\calP^\text{d}_0(\calT_{\Omega}|_{\Gamma})\subseteq \Mh$ and~$\calP_1(\calT_{\Omega}|_{\Gamma}) \subseteq \Mh$, cf.~Assumption~\ref{ass:disc}(1).
\medskip
	
\emph{Case $\calP_0^\text{d}(\calT_{\Omega}|_{\Gamma}) \subseteq \Mh$}: If piecewise constant functions are part of~$\Mh$, we define the mean integral of $\mu$ as $\mu_h \coloneqq \frac{1}{|E|}\int_E \mu \dx\cdot \chi_E \in \Mh$. Then, the continuous and discrete constraint imply that 
\begin{align*}
	\sum_{I \in \calT_\Gamma,\ I \subseteq E} h_I^{-1} \|u_h - p_h\|_{L^{2}(I)}^2 
	&= \int_{E} \mu\,(u_h-p_h) \dx\\
	&= \int_{E} (\mu-\mu_h)\big(u_h-u-(p_h-p)\big) \dx \\
	&\leq \|\mu - \mu_h\|_{H^{-\sfrac 12}(E)} \big(\|u-u_h\|_{H^{\sfrac 12}(E)} + \|p-p_h\|_{H^{\sfrac 12}(E)}\big).
\end{align*}
By \cite[Prop.~2.2 \&~4.8]{AcoB17}, there exists a constant only depending on the shape regularity of $\calT_\Omega|_{\Gamma}$ such that
\begin{align*}
	\|\mu-\mu_h\|_{H^{-\sfrac 1 2}(E)}\, 
	&= \sup_{q \in H^{\sfrac{1}{2}}(E)\setminus\{0\}} \frac{(\mu-\mu_h,q)_{L^2(E)}}{\|q\|_{H^{\sfrac 1 2} (E)}}\\
	&= \sup_{q \in H^{\sfrac 1 2}(E)\setminus\{0\}} \frac{(\mu,q-\tfrac{1}{|E|}\int_E q \dx)_{L^2(E)}}{\|q\|_{H^{\sfrac{1}{2}}(E)}}\\
	&\lesssim h^{\sfrac 1 2}_E\, \|\mu\|_{L^{2}(E)}
	\leq \sqrt \rho\, \Big( \sum_{I \in \calT_\Gamma, I \subseteq E} h_I^{-1} \|u_h - p_h\|_{L^{2}(I)}^2\Big)^{\sfrac 1 2}.
\end{align*}
Combining the latter two estimates, we conclude that 
\begin{equation*}
	\sum_{\substack{I \in \calT_\Gamma, I \subseteq E}} h_I^{-1} \|u_h - p_h\|_{L^{2}(I)}^2 
	\lesssim \|u-u_h\|_{H^{\sfrac 12}(E)}^2 + \|p-p_h\|_{H^{\sfrac 12}(E)}^2.
\end{equation*}
With Lemma~\ref{lem:eta_T_and_eta_I} and appropriate Sobolev embeddings, we get the stated estimate for $\eta_E$. Finally, the estimate of the sum follows by $\sum_{E \in \Ebd} \|\cdot \|_{H^{\sfrac 12}(E)}^2 \leq \|\cdot \|_{H^{\sfrac 12}(\Gamma)}^2$. 
\medskip
	
\emph{Case $\calP_1(\calT_{\Omega}|_{\Gamma}) \subseteq \Mh$}: 
Let $P_E\colon L^2(\omega_{E,\Gamma}) \to \calP_1(\calT_\Omega|_{\omega_{E,\Gamma}}) \subseteq H^1(\omega_{E,\Gamma})$ be the (weighted) Clément operator 
\begin{equation*}
	P_E\,r 
	= \sum_{k=1}^m \frac{(r,\phi_k)_{L^2(\omega_{E,\Gamma})}}{(1,\phi_k)_{L^2(\omega_{E,\Gamma})}}\, \phi_k
\end{equation*}
with the nodal basis functions~$\phi_k$, $k=1,\ldots,m$, of~$\calP_1(\calT_\Omega|_{\omega_{E,\Gamma}})$. Since $\mu$ vanishes outside of~$E$, its quasi-interpolation~$P_E \mu$ is zero on the relative boundary $\partial \omega_{E,\Gamma}$. Hence, $P_E \mu$ can be extended by zero to a function in $\Mh$. Similar to the previous case, we obtain the estimate 
\begin{equation*}
	\sum_{I \in \calT_\Gamma, I \subseteq E} h_I^{-1}\, \|u_h - p_h\|_{L^{2}(I)}^2 
	 \leq \|\mu - P_E \mu\|_{H^{-\sfrac 12}(\omega_{E,\Gamma})} \big(\|u-u_h\|_{H^{\sfrac 12}(\omega_{E,\Gamma})} + \|p-p_h\|_{H^{\sfrac 12}(\omega_{E,\Gamma})}\big).
\end{equation*}
By \cite[Lem.~3.1]{BraPV00}, we have $\|\mu - P_E \mu\|_{L^2(\omega_{E,\Gamma})} \lesssim \|\mu\|_{L^2(E)}$ as well as
\begin{align*}
	\|\mu-P_E \mu\|_{[H^{1}(\omega_{E,\Gamma})]^\ast}\, 
	= \sup_{q \in H^{1}(E)\setminus\{0\}} \frac{(\mu,q-P_E q)_{L^2(E)}}{\|q\|_{H^{1} (\omega_{E,\Gamma})}} 
	\lesssim h_E\, \|\mu\|_{L^2(E)}
\end{align*}
with constants only depending on the regularity of~$\calT_\Omega|_\Gamma$. Here, we have used that~$P_E$ is symmetric with respect to the $L^2(\omega_{E,\Gamma})$-inner product. With the interpolation inequality we conclude $\|\mu - P_E \mu\|_{H^{-1/2}(\omega_{E,\Gamma})} \lesssim h_E^{\sfrac{1}{2}}\, \|\mu\|_{L^{2}(E)}$.
The remainder of the proof follows the lines of the first case.
\end{proof}
It remains to summarize the previous estimates. The combination of Lemmas~\ref{lem:eta_T_and_eta_I}, \ref{lem:eta_E_int}, and~\ref{lem:eta_E_bd} yield the following efficiency result, showing that the error estimators are bounded from above by the actual error plus oscillation terms of the right-hand sides.
\begin{theorem}[Efficiency of the error estimator]
\label{th:efficiency}
Given Assumption~\ref{ass:disc}, the error estimators defined in~\eqref{eqn:etas} satisfy 
\begin{align*}
	&\quad\sum_{T \in \calT_\Omega} \eta_T^2 + \sum_{E \in \calE_\Omega} \eta_E^2 + \sum_{I \in \calT_{\Gamma} } \eta_I^2\\
	&\lesssim \| u-u_h\|_{H^1(\Omega)}^2 + 
	\|p-p_h\|_{H^{1}(\Gamma)}^2 
	+ \|\lambda - \lambda_{h}\|_{H^{-\sfrac 12}(\Gamma)}^2 \\
	&\qquad+ \sum_{T \in \calT_\Omega} h_{T}^2\inf_{f_h \in \Vh} \|f-f_h\|_{L^2(T)}^2 + \sum_{I \in \calT_{\Gamma}} h_I^2 \inf_{g_h \in \Qh} \|g-g_h\|^2_{L^2(I)} \\
	&= \big\|[u-u_h,p-p_h,\lambda-\lambda_h]\big\|^2 + \sum_{T \in \calT_\Omega} h_{T}^2\inf_{f_h \in \Vh} \|f-f_h\|_{L^2(T)}^2 + \sum_{I \in \calT_{\Gamma}} h_I^2 \inf_{g_h \in \Qh} \|g-g_h\|^2_{L^2(I)}.
\end{align*}
The hidden constant only depends on the dimension $d$, the polynomial degrees of the finite element spaces, the polygonal domain~$\Omega$ with its boundary~$\Gamma$, the shape regularity of the triangulations~$\calT_\Omega$ and~$\calT_\Gamma$, the coefficients~$\alpha$ and $\kappa$, as well as the constants~$\sigma$ and $\rho$. 
\end{theorem}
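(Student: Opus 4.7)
The plan is to assemble the theorem directly from the three lemmas preceding it, since each lemma already bounds one of the three classes of estimator contributions ($\eta_T$, $\eta_E$ for interior/boundary edges, $\eta_I$) by local pieces of the error plus data oscillations. The main work is to confirm that summing the local bounds produces global norms with only a shape-regularity-dependent constant, and to observe that the $\lambda$-part of the right-hand side assembles correctly from the boundary edge contributions.

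First, I would recall from Lemma~\ref{lem:eta_T_and_eta_I} the summed bounds~\eqref{eqn:eta_estimate_sum}, i.e., $\sum_T \eta_T^2 \lesssim \|u-u_h\|_{H^1(\Omega)}^2 + \sum_T h_T^2 \inf_{f_h \in \Vh}\|f-f_h\|_{L^2(T)}^2$ and the analogous bound for $\sum_I \eta_I^2$, where the latter additionally produces the term $h_{\Gamma,\max}\|\lambda-\lambda_h\|_{H^{-1/2}(\Gamma)}^2$. Since $h_{\Gamma,\max}$ is bounded by $\diam(\Gamma) \lesssim \diam(\Omega)$, this contributes only a constant factor absorbed in $\lesssim$.

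Next I would invoke Lemma~\ref{lem:eta_E_int} for the interior edge estimators, whose summed statement already reads $\sum_{E \in \Ein}\eta_E^2 \lesssim \|u-u_h\|_{H^1(\Omega)}^2 + \sum_T h_T^2 \inf_{f_h \in \Vh}\|f-f_h\|_{L^2(T)}^2$; this is exactly the interior-edge part. Then Lemma~\ref{lem:eta_E_bd} yields $\sum_{E \in \Ebd}\eta_E^2 \lesssim \|u-u_h\|_{H^1(\Omega)}^2 + \|p-p_h\|_{H^1(\Gamma)}^2 + \|\lambda - \lambda_h\|_{H^{-1/2}(\Gamma)}^2 + \sum_{E \in \Ebd} h_{T_E}^2 \inf_{f_h \in \Vh}\|f - f_h\|_{L^2(T_E)}^2$. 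The $f$-oscillation contributions on boundary elements are subsumed by the global sum over all $T \in \calT_\Omega$ (each $T_E$ appears at most a shape-regularity-dependent number of times among the boundary edges of $T_E$).

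Adding the three summed inequalities and combining the common terms (the $H^1(\Omega)$-error of $u$ appears in all three, and the oscillation sums are the same), we get the displayed right-hand side of the theorem. Using the definition $\|[u-u_h,p-p_h,\lambda-\lambda_h]\|^2 = \|u-u_h\|_{H^1(\Omega)}^2 + \|p-p_h\|_{H^1(\Gamma)}^2 + \|\lambda-\lambda_h\|_{H^{-1/2}(\Gamma)}^2$ rewrites the first line as the triple-norm. The only subtle point, and the one I would check carefully, is that the constant's dependencies agree with the claimed list: the shape regularity of both $\calT_\Omega$ and $\calT_\Gamma$ enters through Lemma~\ref{lem:eta_E_bd} (which used the constant~$\rho$ from Assumption~\ref{ass:disc}(2) in the boundary-jump estimate), and the $L^\infty$ bounds on $\alpha$ and $\kappa$ as well as $\sigma$ enter from Lemma~\ref{lem:eta_T_and_eta_I}; no new constant appears in the summation step. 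No genuine obstacle arises: the proof is essentially a bookkeeping aggregation of the local efficiency lemmas, with the only care needed being to absorb overlap factors from element/edge patches into the hidden constant via shape regularity.
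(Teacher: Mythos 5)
Your proposal is correct and follows exactly the paper's own argument: the paper proves Theorem~\ref{th:efficiency} precisely by summing the conclusions of Lemmas~\ref{lem:eta_T_and_eta_I}, \ref{lem:eta_E_int}, and~\ref{lem:eta_E_bd}, with the factor $h_{\Gamma,\max}$ and the boundary-element oscillation terms absorbed into the hidden constant and the global sums just as you describe. Your additional bookkeeping remarks (bounding $h_{\Gamma,\max}$ by $\diam(\Gamma)$, the overlap of the $T_E$ in the oscillation sum, and the tracing of the constant's dependencies on $\alpha$, $\kappa$, $\sigma$, $\rho$ and the shape regularity) are all consistent with the paper.
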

\begin{remark}
Analogously to $\eta_E$ in Lemma~\ref{lem:eta_E_int}, the additional error estimator $\eta_{\eg}$ defined in~\eqref{eqn:add_eta_3D} is bounded by
\[
	\eta_\eg^2 
	\lesssim \sigma^2 h_\eg^2\, \|p-p_h\|_{L^2(\omega_\eg)} + \|\kappa\|^2_{L^\infty(\omega_\eg)}|p-p_h|_{H^1(\omega_\eg)}^2 + \sum_{\substack{I\in \calT_\Gamma, I \subseteq \omega_\eg}} \eta_I^2
\]
for a three-dimensional (polyhedral) domain $\Omega$. Here, $\omega_\eg$ is defined with respect to $\calT_{\Gamma}$ as $\omega_E$ to $\calT_\Omega$. Furthermore, Lemmas~\ref{lem:eta_T_and_eta_I}, \ref{lem:eta_E_int}, and~\ref{lem:eta_E_bd} are also valid for $\Omega\subseteq \R^3$, implicating again the efficiency of the error estimators. 
\end{remark}
%
%
\section{Numerical Experiments}\label{sect:numerics}

In this final section, we demonstrate the performance of the newly introduced a posteriori error estimators applied to the stationary model problem~\eqref{eqn:saddle_point_AFEM} as well as a parabolic problem with dynamic boundary conditions. 

In all examples, we consider a two-dimensional domain and diffusion coefficients~$\alpha\equiv1$, $\kappa\equiv1$. Moreover, we use bisection as refinement strategy for $\calT_\Gamma$ and newest vertex bisection for $\calT_\Omega$ with the implementation taken from~\cite{FunPW11}. Since newest vertex bisection is based on element marking (instead of marking elements and edges), we attribute the estimator~$\eta_E$ of an edge~$E$ among all~$\eta_T$ and~$\eta_{I}$ with $T \subseteq \omega_E$ and $I \subseteq E$, respectively. The resulting error estimators are denoted by $\widetilde{\eta}_T$ and $\widetilde{\eta}_I$. As marking strategy, we use the well-knwon {\em D\"orfler marking}~\cite{Dor96} with a parameter $\theta \in (0,1)$, i.e., we mark all elements in (minimal) sets $\calM_\Omega\subseteq\calT_\Omega$ and $\calM_\Gamma\subseteq\calT_\Gamma$ such that  
\begin{equation*}
	(1-\theta)\, \bigg(\sum_{T\in \calT_\Omega} \widetilde{\eta}_T^2 + \sum_{I\in \calT_\Gamma} \widetilde{\eta}_I^2 \bigg) 
	\leq \sum_{T\in \calM_\Omega} \widetilde{\eta}_T^2 + \sum_{I\in \calM_\Gamma} \widetilde{\eta}_I^2.
\end{equation*}
For the computation of the errors, we use a reference solution, which is computed on a refined mesh. More precisely, we consider in each step the current mesh obtained by the adaptive process with two additional uniform refinements. 
%
%
\subsection{Stationary problem on the unit square}
\label{sect:numerics:square}

In this first example, we consider the stationary model problem~\eqref{eqn:saddle_point_AFEM} on the unit square, i.e., on $\Omega = (0,1)^2$. The right-hand sides are given by 
\[
	\fu \equiv 0.04, \qquad 
	\fp(x,y) = xy \cos(10 \pi x) \cos(10 \pi y)
\]
and we set $\sigma = 1$ as well as $\theta = 0.75$ for the marking process. The resulting convergence history for the scheme 
\begin{equation}
\label{eqn:P1scheme}
	\Vh = \calP_1(\calT_{\Omega}), \qquad 
	\Qh = \calP_1(\calT_{\Gamma}), \qquad 
	\Mh = \calP_1(\calT_{\Omega}|_{\Gamma})
\end{equation}
is illustrated in Figure~\ref{fig:stationary_square}. Note that the $x$-axis includes the sum of the degrees of freedom for $u$, $p$, and $\lambda$. The plot clearly shows the improvement caused by the adaptive process as we reach a convergence rate of $0.65$ rather than $0.5$ for a uniform refinement. Moreover, one can observe that error and estimator, which are defined as 
\[
	\text{error} 
	\coloneqq \big\| [u-u_h,p-p_h,\lambda-\lambda_h] \big\|, \quad 
	\text{estimator} 
	\coloneqq \bigg(\sum_{T\in \calT_\Omega} \eta_T^2 + \sum_{E\in \calE_\Omega} \eta_E^2 + \sum_{I\in \calT_\Gamma} \eta_I^2 \bigg)^{1/2} 
\] 
are indeed of the same order as expected due to Theorems~\ref{th:reliability} and~\ref{th:efficiency}. 
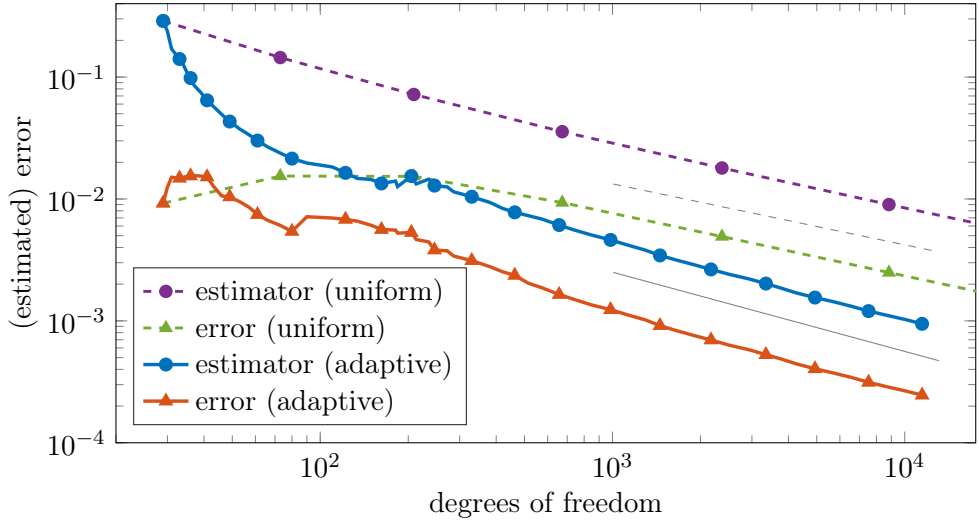
\begin{figure}
%
%
\definecolor{mycolor1}{rgb}{0.00000,0.44700,0.74100}%
\definecolor{mycolor2}{rgb}{0.85000,0.32500,0.09800}%
\definecolor{mycolor3}{rgb}{0.49400,0.18400,0.55600}%
\definecolor{mycolor4}{rgb}{0.46600,0.67400,0.18800}%
\begin{tikzpicture}

\begin{axis}[%
width=4.5in,
height=2.3in,
at={(2.6in,1.27in)},
scale only axis,
xmode=log,
xmin=20,
xmax=1.75e4,
xminorticks=true,
xlabel={degrees of freedom},
ymode=log,
ymin=1e-04,
ymax=0.4,
yminorticks=true,
ylabel={(estimated) error},
axis background/.style={fill=white},
legend style={legend cell align=left, align=left, at={(0.02,0.04)}, anchor=south west, draw=white!15!black}
]
\addplot [color=mycolor3, line width=1.1pt, dashed, mark=*, mark options={solid, mycolor3}]
table[row sep=crcr]{%
29	0.289077178733986\\
73	0.144543130621917\\
209	0.0719917453131967\\
673	0.0356435238693348\\
2369	0.0179479307845777\\
8833	0.00899918504965468\\
34049	0.0045040934089755\\
};
\addlegendentry{estimator (uniform)}

\addplot [color=mycolor4, line width=1.1pt, dashed, mark=triangle*, mark options={solid, mycolor4}]
table[row sep=crcr]{%
29	0.00922824163085455\\
73	0.015419539515169\\
209	0.015333530953737\\
673	0.0093382004753873\\
2369	0.00491772624702099\\
8833	0.00249099810787252\\
34049	0.00124951175828402\\
};
\addlegendentry{error (uniform)}

\addplot [color=gray, dashed, forget plot]
table[row sep=crcr]{%
	1006	0.013261909638069\\
	13087	0.003676926965610\\	
};

\addplot [color=mycolor1, line width=1.3pt, mark=*, mark options={solid, mycolor1}, mark repeat=3]
  table[row sep=crcr]{%
29	0.289077178733986\\
30	0.237106749804394\\
31	0.169921143639504\\
33	0.14069634631624\\
34	0.125973951766013\\
35	0.109285885204163\\
36	0.0981518259323994\\
37	0.085585535150427\\
39	0.0748002522395517\\
41	0.0644563844850817\\
43	0.0577002514563067\\
46	0.0494517187937144\\
49	0.0431769146318515\\
53	0.0375333092799185\\
57	0.0336754840424281\\
61	0.0301292596190646\\
66	0.0267867220957189\\
73	0.0237923169447894\\
80	0.0214411628044468\\
90	0.0197242973873901\\
110	0.0183101501805041\\
122	0.0163767087447767\\
134	0.0147303947925865\\
149	0.0144740624296959\\
162	0.0134061042529231\\
181	0.0140208618451674\\
184	0.0126297299150079\\
205	0.0154087077110136\\
213	0.0132837177423147\\
235	0.0145268383123242\\
246	0.0128711656842504\\
272	0.012547168097245\\
286	0.011446321964908\\
330	0.0104266414460085\\
381	0.00936103123651732\\
420	0.00837733977601331\\
463	0.00777149845536296\\
516	0.00728805098345933\\
573	0.00684456856881421\\
657	0.00610973181349622\\
736	0.00558547693502258\\
846	0.00506925517413575\\
985	0.00461343003071105\\
1140	0.0041531737586773\\
1295	0.00380462404206552\\
1454	0.00344421195710975\\
1659	0.00314822980437839\\
1914	0.00287565562022366\\
2173	0.00264166148689119\\
2508	0.00240990231805243\\
2920	0.00221338778525475\\
3350	0.00202236463836094\\
3783	0.00185322465004344\\
4263	0.00169397021470252\\
4930	0.00155367281649747\\
5732	0.00143160138493401\\
6585	0.00131512429441096\\
7522	0.00120275581758005\\
8698	0.00110976296702797\\
10008	0.00102781223865196\\
11465	0.000945474964968424\\
};
\addlegendentry{estimator (adaptive)}

\addplot [color=mycolor2, line width=1.3pt, mark=triangle*, mark options={solid, mycolor2}, mark repeat=3]
  table[row sep=crcr]{%
29	0.00922824163085455\\
30	0.0125005082245405\\
31	0.0150774384654501\\
33	0.0147884130802104\\
34	0.0150593189575008\\
35	0.0153254658450376\\
36	0.0154405217492682\\
37	0.0155538252810128\\
39	0.0154157668836151\\
41	0.0151590641043682\\
43	0.0128078262432804\\
46	0.0105833383801445\\
49	0.0104085599593353\\
53	0.00949385396952551\\
57	0.00857276046798622\\
61	0.0074513135829646\\
66	0.00669018223154488\\
73	0.00607044656957036\\
80	0.00541773878948118\\
90	0.00715050031781529\\
110	0.0069749421891312\\
122	0.00680203506521164\\
134	0.00657161162202037\\
149	0.00609909839456634\\
162	0.00561226336566203\\
181	0.005559601748674\\
184	0.00524316964247748\\
205	0.00530986387185974\\
213	0.00464618833299466\\
235	0.00444508114968729\\
246	0.00381591499639849\\
272	0.00375616001797948\\
286	0.00340335074018526\\
330	0.00312252966033923\\
381	0.00278054978859932\\
420	0.00253212092355556\\
463	0.00236104283904284\\
516	0.00203307024553635\\
573	0.0018708620030838\\
657	0.00164491696798884\\
736	0.00150480078478236\\
846	0.0013537222951547\\
985	0.00123586598386637\\
1140	0.00111012027963332\\
1295	0.00101895763184055\\
1454	0.000913877671207956\\
1659	0.000830150943845939\\
1914	0.000755388989105512\\
2173	0.000697354215754946\\
2508	0.000631798407417221\\
2920	0.000581262955372462\\
3350	0.000528514131236154\\
3783	0.000485874267564439\\
4263	0.00044367080038134\\
4930	0.000404676414455984\\
5732	0.000372482839490197\\
6585	0.000343170860270774\\
7522	0.000313235028034667\\
8698	0.000287967193016109\\
10008	0.000266909984225669\\
11465	0.000245982264024888\\
};
\addlegendentry{error (adaptive)}

\addplot [color=gray, forget plot]
  table[row sep=crcr]{%
1006	0.0024991729678321\\
13087	0.000471562146181624\\
};
\end{axis}
\end{tikzpicture}%
	\caption{Convergence history for the stationary model problem of Section~\ref{sect:numerics:square}. The dashed line in gray indicates order~$0.5$, whereas the solid gray line indicates order~$0.65$. }
	\label{fig:stationary_square}
\end{figure}

The resulting mesh of the adaptive process is shown in Figure~\ref{fig:mesh_square}. It clearly shows a refinement of the upper right corner for $\calT_\Omega$ as well as for $\calT_\Gamma$. This is due to the fact that the right-hand side~$\fp$ as well as the solution oscillate strongly in this region. 
\begin{figure}
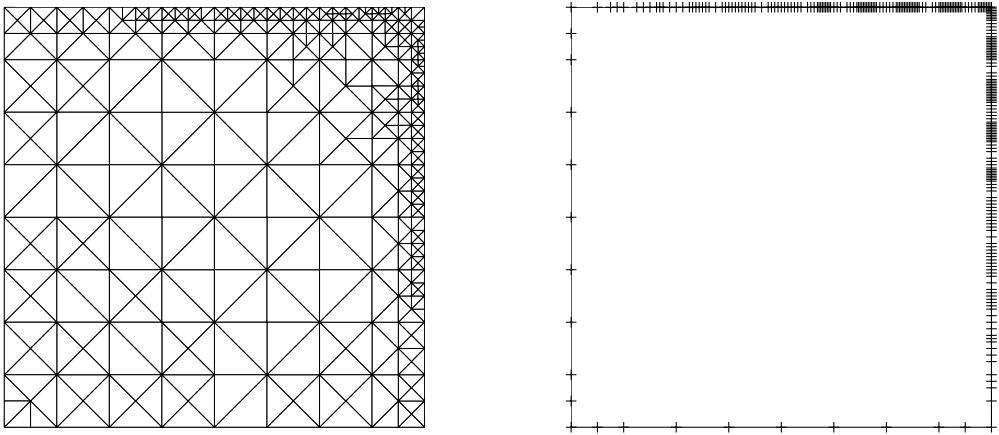

	\centering
	\include{pics/mesh_square}
	\caption{Illustration of the bulk mesh $\calT_\Omega$ (left) and the boundary mesh $\calT_\Gamma$ (right) after 40 adaptive refinement steps.}
	\label{fig:mesh_square}
\end{figure}
%
%
\subsection{Stationary problem on the L-shape}
\label{sect:numerics:Lshape}

The second example works on the L-shape, i.e., on a non-convex computational domain. Here, we consider the right-hand sides
\[
	\fu \equiv 4, \qquad 
	\fp(x,y) = 4\, (x^2-x + y^2 - y)
\]
and again $\sigma = 1$, $\theta = 0.75$. We compare two different stable finite element schemes. 
%
%

First, we use piecewise linear elements as in~\eqref{eqn:P1scheme}. As shown in Figure~\ref{fig:stationary_Lshape}, this yields the rate~$0.375$ for a uniform refinement and the optimal rate~$0.5$ in the adaptive case. 
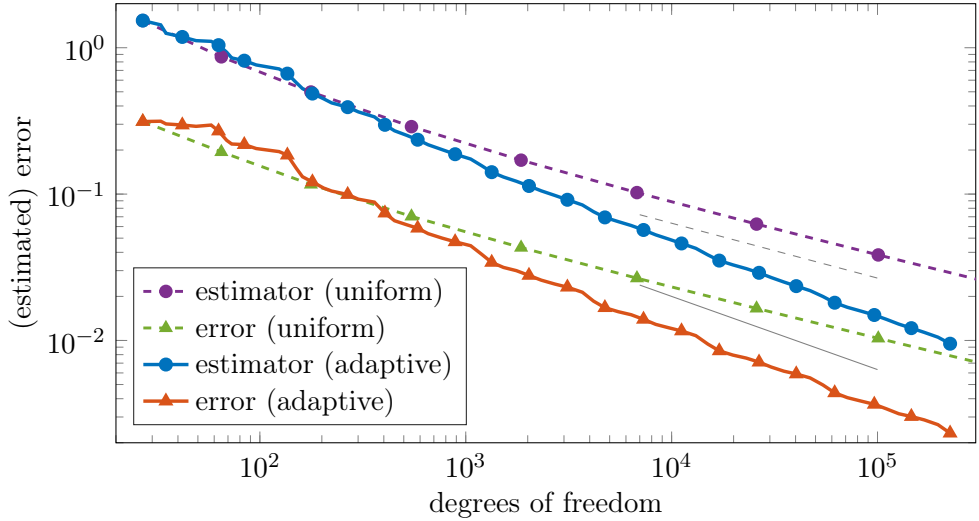
\begin{figure}
%
%
\definecolor{mycolor1}{rgb}{0.00000,0.44700,0.74100}%
\definecolor{mycolor2}{rgb}{0.85000,0.32500,0.09800}%
\definecolor{mycolor3}{rgb}{0.49400,0.18400,0.55600}%
\definecolor{mycolor4}{rgb}{0.46600,0.67400,0.18800}%
\begin{tikzpicture}

\begin{axis}[%
width=4.5in,
height=2.3in,
at={(2.6in,1.27in)},
scale only axis,
xmode=log,
xmin=20,
xmax=3e5,
xminorticks=true,
xlabel={degrees of freedom},
ymode=log,
ymin=0.002,
ymax=2,
yminorticks=true,
ylabel={(estimated) error},
axis background/.style={fill=white},
legend style={legend cell align=left, align=left, at={(0.02,0.04)}, anchor=south west, draw=white!15!black}
]
\addplot [color=mycolor3, line width=1.1pt, dashed, mark=*, mark options={solid, mycolor3}]
table[row sep=crcr]{%
	27	1.53306021270245\\
	65	0.866394489180529\\
	177	0.498361519023667\\
	545	0.288994127408124\\
	1857	0.170529394879733\\
	6785	0.102388550167333\\
	25857	0.0623651025633234\\
	100865	0.0384010980432459\\
	398337	0.0238286870704854\\
};
\addlegendentry{estimator (uniform)}

\addplot [color=mycolor4, line width=1.1pt, dashed, mark=triangle*, mark options={solid, mycolor4}]
table[row sep=crcr]{%
	27	0.313114070518793\\
	65	0.194104755659312\\
	177	0.116384350190149\\
	545	0.0704139099719452\\
	1857	0.0431187508641578\\
	6785	0.0266400152224665\\
	25857	0.0165628965949829\\
	100865	0.0103426301986902\\
	398337	0.00647772548539351\\
};
\addlegendentry{error (uniform)}

\addplot [color=gray, dashed,forget plot]
table[row sep=crcr]{%
	7e3 0.072296659445794\\
	1e5 0.026670428643266\\
};

\addplot [color=mycolor1, line width=1.3pt, mark=*, mark options={solid, mycolor1}, mark repeat=3]
  table[row sep=crcr]{%
27	1.53306021270245\\
33	1.43254797101003\\
35	1.25489503230261\\
42	1.18468418471591\\
49	1.11780872220009\\
58	1.10562261276571\\
63	1.04155670400106\\
69	0.932113879537806\\
73	0.851704549225825\\
84	0.815866071010815\\
96	0.762832806104759\\
124	0.716444963788164\\
136	0.665340098502121\\
150	0.579031657749735\\
161	0.526118932488323\\
180	0.48619529499551\\
204	0.447889358812959\\
223	0.419993395491337\\
267	0.392656425104958\\
301	0.364505490985813\\
360	0.337051513395728\\
405	0.297368367988174\\
451	0.270749576701967\\
516	0.252250933291299\\
584	0.235194170401638\\
657	0.217273875339081\\
768	0.201211960407704\\
888	0.187305737127636\\
1057	0.173549919525605\\
1201	0.155619099371066\\
1337	0.141197612596924\\
1525	0.130826981903002\\
1762	0.122340969345807\\
2028	0.113607797276242\\
2329	0.105180181665606\\
2706	0.0979861226880848\\
3117	0.0914570418347433\\
3687	0.0842828593791107\\
4178	0.0757576225744867\\
4741	0.0693157692007317\\
5507	0.0648501749563534\\
6359	0.0610195475284337\\
7303	0.0568603211606501\\
8375	0.0526519283857913\\
9703	0.049200960737072\\
11170	0.046002513122629\\
13111	0.042597381646028\\
14998	0.0384175991016081\\
17037	0.0351348298062461\\
19837	0.0328463094594632\\
23059	0.0309438879942556\\
26580	0.0289867998839556\\
30436	0.0269206720018173\\
35118	0.025069346901472\\
40301	0.0235074686251624\\
47067	0.0219014704028087\\
54351	0.0199588281123192\\
61954	0.0181088416755798\\
71412	0.0168494634466185\\
83413	0.0158449381095434\\
96466	0.0149264946186832\\
110559	0.0139338701613326\\
126910	0.0129322315322831\\
146015	0.012139159042802\\
168567	0.0113613008720407\\
196531	0.0105316837442832\\
225221	0.0095012278855756\\
};
\addlegendentry{estimator (adaptive)}

\addplot [color=mycolor2, line width=1.3pt, mark=triangle*, mark options={solid, mycolor2}, mark repeat=3]
  table[row sep=crcr]{%
27	0.313114070518793\\
33	0.314723411242514\\
35	0.301571973987125\\
42	0.296454064403954\\
49	0.290407689263066\\
58	0.296236234255111\\
63	0.269731926956303\\
69	0.232898220702279\\
73	0.219853365545605\\
84	0.217815796144905\\
96	0.204751625977653\\
124	0.195154307246473\\
136	0.184640180271321\\
150	0.150614782594236\\
161	0.131414618178638\\
180	0.121060710635628\\
204	0.110013370393168\\
223	0.104928688595204\\
267	0.0993256235991521\\
301	0.0922922113238216\\
360	0.0881165401097593\\
405	0.0741909476857476\\
451	0.0657663655271358\\
516	0.061764276325863\\
584	0.0584819372095427\\
657	0.0536632370976868\\
768	0.050020807368561\\
888	0.0472145962876381\\
1057	0.0443880368185931\\
1201	0.0385749785496499\\
1337	0.0341412283759291\\
1525	0.0315498239767752\\
1762	0.0299854502263783\\
2028	0.0278244694278027\\
2329	0.0258075468824164\\
2706	0.0242956513050244\\
3117	0.0231226611678283\\
3687	0.021347929531997\\
4178	0.0185545929306835\\
4741	0.0167485041243796\\
5507	0.0157214957081293\\
6359	0.0149910834949771\\
7303	0.0139444631593981\\
8375	0.012875692289712\\
9703	0.0121983309555166\\
11170	0.011613041496454\\
13111	0.0107843869177473\\
14998	0.00944277308546364\\
17037	0.00849299423435781\\
19837	0.00794689996540153\\
23059	0.00757639709803888\\
26580	0.00711056332930039\\
30436	0.00658452920554429\\
35118	0.00616549114993633\\
40301	0.00590322385982341\\
47067	0.00554628066288303\\
54351	0.00496154831498915\\
61954	0.00438791094462142\\
71412	0.00406628960124434\\
83413	0.00384999513472366\\
96466	0.00366194791305118\\
110559	0.00340964893608477\\
126910	0.00315701127086957\\
146015	0.00302170755141426\\
168567	0.00286683847575419\\
196531	0.00265957280188931\\
225221	0.00232664777355307\\
};
\addlegendentry{error (adaptive)}

\addplot [color=gray, forget plot]
  table[row sep=crcr]{%
	7e3	0.023904572186688\\
	1e5 0.006324555320337\\
};

\end{axis}

\end{tikzpicture}%
	\caption{Convergence history for the stationary model problem of Section~\ref{sect:numerics:Lshape} using piecewise linear elements. The gray lines indicate orders~$0.375$ (dashed) and~$0.5$ (solid).}
	\label{fig:stationary_Lshape}
\end{figure}
%


Second, we consider a piecewise quadratic approximation for $u$ and $p$ in combination with a piecewise constant approximation of the Lagrange multiplier, i.e., 
\begin{equation}
\label{eqn:P2scheme}
	\Vh = \calP_2(\calT_{\Omega}), \qquad 
	\Qh = \calP_2(\calT_{\Gamma}), \qquad 
	\Mh = \calP^\text{d}_0(\calT_{\Omega}|_{\Gamma}). 
\end{equation}
Note that this is indeed a stable scheme according to the derivations of Section~\ref{sect:formulation:space}. The convergence results are shown in Figure~\ref{fig:stationary_Lshape_P2} and demonstrate an even higher numerical gain of the adaptive procedure. If the discrete multiplier space~$\Mh$ is set to~$\calP_1(\calT_{\Omega}|_\Gamma)$ or~$\calP_2(\calT_{\Omega}|_\Gamma)$, we observe the same rates. Hence, we omit the corresponding error plots here. 
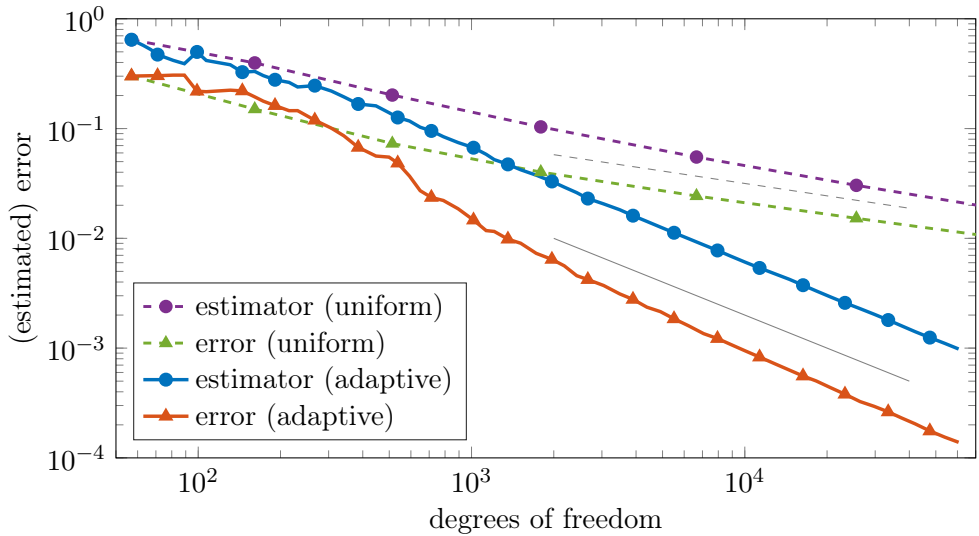
\begin{figure}
%
%
\definecolor{mycolor1}{rgb}{0.00000,0.44700,0.74100}%
\definecolor{mycolor2}{rgb}{0.85000,0.32500,0.09800}%
\definecolor{mycolor3}{rgb}{0.49400,0.18400,0.55600}%
\definecolor{mycolor4}{rgb}{0.46600,0.67400,0.18800}%
\begin{tikzpicture}

\begin{axis}[%
width=4.5in,
height=2.3in,
at={(2.5in,1.101in)},
scale only axis,
xmode=log,
xmin=50,
xmax=7e4,
xminorticks=true,
xlabel={degrees of freedom},
ymode=log,
ymin=1e-04,
ymax=1,
yminorticks=true,
ylabel={(estimated) error},
axis background/.style={fill=white},
legend style={legend cell align=left, align=left, at={(0.02,0.04)}, anchor=south west, draw=white!15!black}
]
\addplot [color=mycolor3, line width=1.1pt, dashed, mark=*, mark options={solid, mycolor3}]
table[row sep=crcr]{%
	57	0.644799373815\\
	161	0.396506675922182\\
	513	0.201943460743727\\
	1793	0.103551942377226\\
	6657	0.0550332447151835\\
	25601	0.0303901230705194\\
	100353	0.017356509839451\\
	397313	0.0101879305959064\\
};
\addlegendentry{estimator (uniform)}

\addplot [color=mycolor4, line width=1.1pt, dashed, mark=triangle*, mark options={solid, mycolor4}]
table[row sep=crcr]{%
	57	0.300817139448579\\
	161	0.150414547448613\\
	513	0.0730484589401183\\
	1793	0.040150737327292\\
	6657	0.0243124639790219\\
	25601	0.0152240842477965\\
	100353	0.0095989502000785\\
	397313	0.00605447729555946\\
};
\addlegendentry{error (uniform)}

\addplot [color=gray, dashed,forget plot]
table[row sep=crcr]{%
	2e3		0.057824748377162\\
	4e4		0.018803015465432\\
};

\addplot [color=mycolor1, line width=1.3pt, mark=*, mark options={solid, mycolor1}, mark repeat=3]
  table[row sep=crcr]{%
57	0.644799373815\\
63	0.571491648070258\\
67	0.524916559392508\\
71	0.47186969118717\\
81	0.417931887437756\\
89	0.389796622576729\\
99	0.498807618864767\\
107	0.417163959737229\\
131	0.38128847878461\\
145	0.326645141795959\\
161	0.331674113124509\\
173	0.303644913318356\\
191	0.27834192021163\\
215	0.263361381107737\\
231	0.239298244088975\\
267	0.245999656424635\\
307	0.219670540212529\\
341	0.194435301632709\\
385	0.167469828212291\\
447	0.161026023127041\\
499	0.139525335143329\\
537	0.126132306693875\\
593	0.117166755761357\\
649	0.102841368190135\\
713	0.0951736509295765\\
789	0.0843280326545937\\
893	0.0745933838775295\\
1017	0.0669369625542257\\
1131	0.0582845990117969\\
1211	0.0521500812154389\\
1359	0.0471744801097749\\
1513	0.0421440457135401\\
1725	0.0376211461793596\\
1969	0.032976698636113\\
2185	0.0292721530757265\\
2427	0.0260665540050735\\
2665	0.023034701061823\\
3045	0.0205882219400981\\
3489	0.0182067058774718\\
3899	0.0160871954153876\\
4385	0.0142511120314513\\
4911	0.0126636026851072\\
5509	0.0112435021902923\\
6229	0.00992511772357579\\
7067	0.0087235396197679\\
7945	0.00775616394041329\\
8897	0.00686903482933977\\
10021	0.00607136454391675\\
11325	0.00537349949999592\\
12877	0.00475524172860854\\
14555	0.00423328583235043\\
16333	0.00374293421813739\\
18245	0.00332071926255447\\
20605	0.00292353856177568\\
23257	0.00258410016666204\\
26405	0.00228823041654015\\
29747	0.0020343613292212\\
33481	0.00180221605925017\\
37485	0.00159544955424697\\
42013	0.0014116337105788\\
47545	0.00124682983115432\\
53869	0.00110560887445301\\
60597	0.000981446208626655\\
};
\addlegendentry{estimator (adaptive)}

\addplot [color=mycolor2, line width=1.3pt, mark=triangle*, mark options={solid, mycolor2}, mark repeat=3]
  table[row sep=crcr]{%
57	0.300817139448579\\
63	0.302091839288534\\
67	0.303057467107919\\
71	0.302958227522241\\
81	0.306708256234206\\
89	0.306634261120974\\
99	0.218901341472411\\
107	0.216666883305152\\
131	0.223678809493572\\
145	0.220380130007935\\
161	0.194413895243603\\
173	0.1776735303319\\
191	0.161929816343009\\
215	0.145524674378984\\
231	0.145644516398407\\
267	0.119253906762554\\
307	0.10085496878509\\
341	0.0859443333441144\\
385	0.0674215936979642\\
447	0.0561893145511645\\
499	0.0549502655065281\\
537	0.0484826440116618\\
593	0.0369183266960325\\
649	0.0271549104049491\\
713	0.0235836776375621\\
789	0.022120634001264\\
893	0.0186030617985743\\
1017	0.0146768068042312\\
1131	0.0117652966548274\\
1211	0.0115578200504207\\
1359	0.0098325623638185\\
1513	0.00902446870665117\\
1725	0.00730072382722446\\
1969	0.00643185294128793\\
2185	0.00561916848189454\\
2427	0.00456774281238024\\
2665	0.00421077944771773\\
3045	0.00371814481853705\\
3489	0.00311540002447035\\
3899	0.00277914560422406\\
4385	0.00235920537974272\\
4911	0.00214983277349669\\
5509	0.00184948458272923\\
6229	0.00159617461883935\\
7067	0.00135491036146912\\
7945	0.0012225099666419\\
8897	0.00107780978038903\\
10021	0.000945236496815259\\
11325	0.000829027870977384\\
12877	0.000720333287807368\\
14555	0.000631063821525644\\
16333	0.000557032646738909\\
18245	0.000502332196270759\\
20605	0.000436043592762939\\
23257	0.000380423022519804\\
26405	0.000325905159691058\\
29747	0.000294744920468895\\
33481	0.00026248444475878\\
37485	0.000230923193286995\\
42013	0.000204204461291913\\
47545	0.000175970731361624\\
53869	0.000154617968116448\\
60597	0.000138633837651818\\
};
\addlegendentry{error (adaptive)}

\addplot [color=gray, solid, forget plot]
  table[row sep=crcr]{%
2e3		1e-2\\
4e4		5e-4\\
};

\end{axis}

\end{tikzpicture}%
	\caption{Convergence history for the stationary model problem of Section~\ref{sect:numerics:Lshape} using piecewise quadratic elements for $u$, $p$ and piecewise constants for $\lambda$. The gray lines indicate orders~$0.375$ (dashed) and~$1.0$ (solid).}
	\label{fig:stationary_Lshape_P2}
\end{figure}

An illustration of the adaptive mesh refinement is given in Figure~\ref{fig:mesh_Lshape}. Note that both meshes show refinements in the same regions. Nevertheless, $u$ restricted to the boundary has~$190$ degrees of freedom, whereas $p$ is defined on a mesh with~$1030$ degrees of freedom. This additional refinement of the boundary is only possible because of the special formulation of the system equations as coupled system. Without the separation of $u|_\Gamma$ and $p$, we would need a refinement of $\calT_\Omega|_\Gamma$ in order to get the same accuracy. This, however, would call for an significant increase of the degrees of freedom due to the higher topological dimension of the bulk. 
\begin{figure}
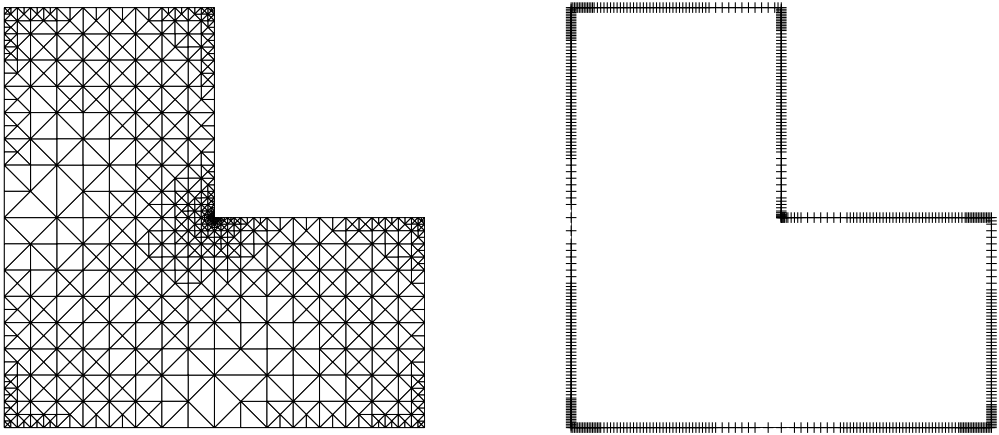

	\centering
	\include{pics/mesh_Lshape}
	\caption{Illustration of the bulk mesh $\calT_\Omega$ (left) and the boundary mesh $\calT_\Gamma$ (right) after 40 adaptive refinement steps.}
	\label{fig:mesh_Lshape}
\end{figure}
%
%
%
\subsection{Parabolic problem with dynamic boundary conditions}
\label{sect:numerics:dynamical}

Within this final numerical example, we consider the dynamic problem
\begin{align*}
	\dot u -\Delta u 
	&= 0.1 \hspace{3.65cm}\text{in } \Omega = (0,1)^2,\\
	\dot u - \Delta_{\Gamma} u + \partial_n u 
	&= xy \cos(\pi t x)\cos(\pi t y) \qquad\text{on } \Gamma = \partial\Omega.
\end{align*}
with homogeneous initial data. For the computation, we set $[1,10]$ as time horizon and apply the implicit Euler scheme with constant step size $\tau = 1.5\cdot 10^{-2}$ for the time stepping. For the spatial discretization, we consider piecewise quadratic elements as in~\eqref{eqn:P2scheme}. 
At every time point, we check if the estimated spatial error is smaller than $10^{-6}$. If not, we use the mentioned D\"orfler marking strategy with parameter~$\theta = 0.75$ and refine the meshes until this condition is satisfied. In the beginning, we start with~$41$, $16$, and $8$ degrees of freedom for $u$, $p$, and $\lambda$, respectively. 
At the end of the simulation, the numbers are~$4912$, $20674$, and $301$. The development of the degrees of freedom over time is illustrated in Figure~\ref{fig:dynamical_dofs} (left). 
\begin{figure}
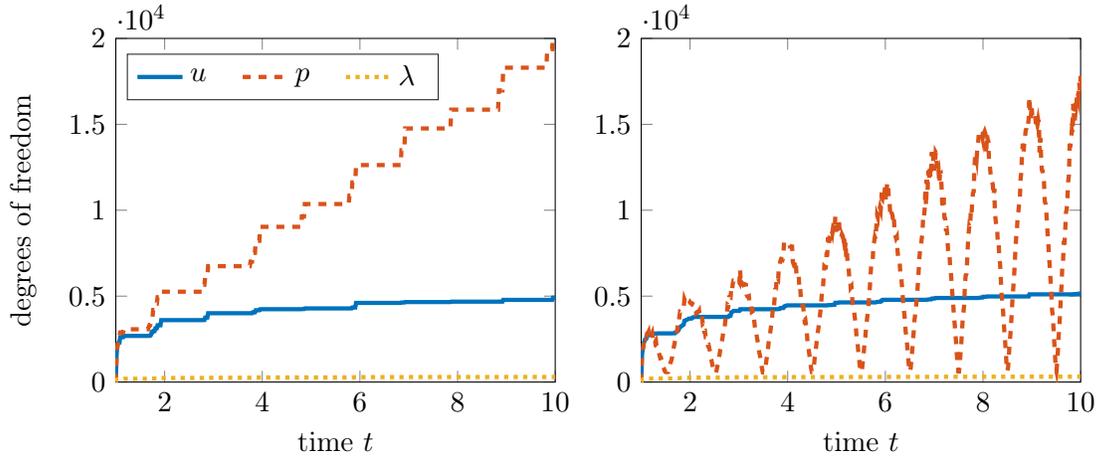

	\centering
	\include{pics/dynamical_dofs+adap}
	\caption{Evolution of the degrees of freedom for $u$, $p$, and $\lambda$ over time without (left) and with (right) coarsening of $\Qh$.}
	\label{fig:dynamical_dofs}
\end{figure}
%

As a proof of concept, we repeat this experiment with a coarsening strategy for the boundary mesh. Coarsening is of special interest for time-dependent problems, where the regions with large (estimated) errors can change rapidly~\cite[Sec.~4.4.4]{Bar16}. Here, we follow the heuristic strategy of~\cite[Sec.~7.2]{FunPW11}, which means that elements $I\in \calT_{\Gamma}$ are coarsened if
\[
	\widetilde{\eta}_I 
	\le \frac{1}{4\cdot 10^{6}\, |\calT_\Gamma|}.
\]
The associated degrees of freedom over time are shown in Figure~\ref{fig:dynamical_dofs} (right). Obviously, the numbers are always smaller compared to the previous simulation and are minimal for~$t \approx 1.5,2.5,\ldots$, i.e., whenever $g(t)=0$. This coincides with the times where the degrees of freedom of $p$ stay almost constant in the simulation without coarsening. At these points, a sound representation of $p$ on a coarse mesh is possible. 

Finally, we present the approximated solution~$u$ at the final time point $t=10$ in Figure~\ref{fig:dynamical_final}. One can see that the solution is oscillating strongly at the boundary parts~$(0,1)\times \{1\}$ and~$\{1\}\times(0,1)$. To capture this behavior numerically, we need more degrees of freedom for $p$. On the other hand, the approximation of $u$ gets along on a coarser mesh, since the solution is not so oscillatory in the bulk. 
\begin{figure}
	\centering
	\includegraphics[scale=0.8]{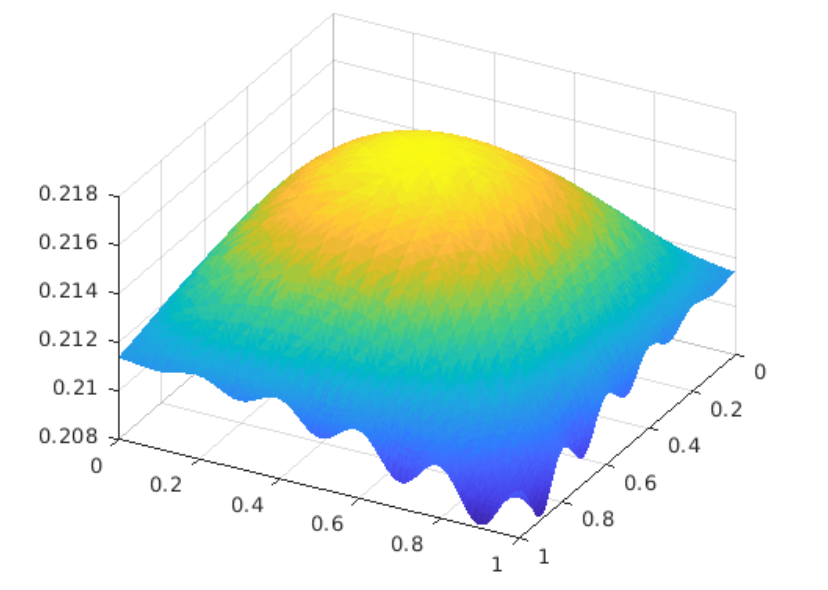}
	\caption{Numerical solution $u$ at the final time point $T=10$ for the parabolic problem of Section~\ref{sect:numerics:dynamical}}
	\label{fig:dynamical_final}
\end{figure}
%
%
\section{Conclusion}\label{sect:conclusion}
In this paper, we have introduced an adaptive procedure for the simulation of parabolic problems with dynamic boundary conditions. Based on a PDAE formulation, we are able to construct local error estimators, which distinguish necessary refinements of the bulk mesh along the boundary, i.e., of~$\calT_\Omega|_\Gamma$, and the boundary mesh~$\calT_\Gamma$. Moreover, we have characterized inf--sup stable finite element schemes for the spatial discretization. Within a number of numerical experiments, we have shown that the proposed algorithm reaches optimal convergence rates. In addition, one can observe that the boundary mesh gets more refined than $\calT_\Omega|_\Gamma$, which yields notable computational savings. 
%
%
\bibliographystyle{alpha} 
\bibliography{bib_dynBC}
\end{document}